\tikzset{->-/.style={decoration={markings,mark=at position #1 with {\arrow{>}}},postaction={decorate}}}
\definecolor{red}{rgb}{1,0,0} 
 \definecolor{darkgreen}{rgb}{0, .7, 0}
 \definecolor{purple}{rgb}{.7, 0, 1}
\tikzset{mynode/.style={draw,circle,fill=black,inner sep=2pt,outer sep=0.5pt}}
\newtheorem{theorem}{Theorem}[section]
\newtheorem*{theorem*}{Theorem}
\newtheorem*{lemma*}{Lemma}
\newtheorem{proposition}[theorem]{Proposition}
\newtheorem{lemma}[theorem]{Lemma}
\newtheorem{corollary}[theorem]{Corollary}
\theoremstyle{definition}
\theoremstyle{remark}
\begin{document}
\title{Higher dimensional algebraic fiberings for pro-$p$ groups}
\author{Dessislava H. Kochloukova}

\address
{Department of Mathematics, State University of Campinas (UNICAMP), 13083-859, Campinas, SP, Brazil }
\email{desi@unicamp.br}

\keywords{ algebraic fibering, pro-$p$ groups, coherence, homological type $FP_m$}

\maketitle

\begin{abstract} We prove some conditions for higher dimensional algebraic fibering of  pro-$p$ group extensions and we establish corollaries about incoherence of pro-$p$ groups.  In particular, if $G = K \rtimes \Gamma$ is a pro-$p$ group, $\Gamma$ a finitely generated free pro-$p$ group with $d(\Gamma) \geq 2$,  $K$ a finitely presented pro-$p$ group with $N$ a normal pro-$p$ subgroup of $K$ such that $K/ N \simeq \mathbb{Z}_p$ and $N$  not finitely generated as a pro-$p$ group, then $G$ is incoherent (in the category of pro-$p$ groups). Furthermore we show that if $K$ is a free pro-$p$ group  with $d(K) = 2$ then either 
$Aut_0(K)$ is incoherent (in the category of pro-$p$ groups) or there is a finitely presented pro-$p$ group, without non-procyclic free pro-$p$ subgroups, that has a metabelian pro-$p$ quotient that is not finitely presented i.e. a pro-$p$ version of a result of Bieri-Strebel does not hold.
\end{abstract}

\section{Introduction}

For a pro-$p$ group $G$ we denote by $K[[G]]$ the completed group algebra of $G$ over the ring $K$, where $K$ is the field with $p$ elements $\mathbb{F}_p$ or the ring of the $p$-adic numbers $\mathbb{Z}_p$. By definition a pro-$p$ group $G$ is of type $FP_m$ if the trivial $\mathbb{Z}_p[[G]]$-module $\mathbb{Z}_p$ has a projective resolution where all projectives in dimension $\leq m$ are finitely generated $\mathbb{Z}_p[[G]]$-modules. Note that $G$ is of type $FP_1$ if and only if $G$ is finitely generated as a pro-$p$ group. And $G$ is of type $FP_2$ if and only if $G$ is finitely presented as a pro-$p$ group i.e. $G \simeq F/ R$, where $F$ is a free pro-$p$ group with a finite free basis $X$ and $R$ is the smallest normal pro-$p$ subgroup of $F$ that contains some fixed finite set of relations of $G$. It is interesting to note that for abstract (discrete) groups the abstract versions of the properties $FP_2$ and finite presentability do not coincide \cite{B-B}.

In this paper  we develop a pro-$p$ version of some of the results on algebraic fibering of abstract group extensions developed  by the author and Vidussi in \cite{K-V} and in the case of results on incoherence we prove results stronger than the ones proved in the abstract case. The results in \cite{K-V} generalise the main results of Friedl and Vidussi in \cite{F-V} and the main results of  Kropholler and Walsh in \cite{K-W}.
The proofs of the results of \cite{F-V},  \cite{K-V} and  \cite{K-W} use the Bieri-Strebal-Neumann-Renz $\Sigma$-invariants introduced in \cite{B-N-S} and \cite{B-R}. 
In \cite{King} King suggested  a  $\Sigma$-invariant in the case of metabelian pro-$p$ groups \cite{King}. We will use the King  invariant
in the proof of Proposition \ref{King-used} but the rest of the results in this paper would have homological proofs independant from the King invariant.

\begin{theorem} \label{Main1} Let $1 \to K \to G \to \Gamma \to 1$ be a short exact sequence of pro-$p$ groups such that $G$ and $K$ are of type $FP_{n_0}$, $\Gamma^{ab}$ is infinite and  there is a normal pro-$p$ subgroup $N$ of $K$ such that $G'\cap K \subseteq N$, $K/ N \simeq \mathbb{Z}_p$ and $N$ is of type $FP_{n_0-1}$. Then there is a normal pro-$p$ subgroup $M$ of $G$ such that $G/ M \simeq \mathbb{Z}_p$, $M \cap K = N$ and $M$ is of type $FP_{n_0}$. Furthermore if $K$, $G$ and $N$ are of type $FP_{\infty}$ then $M$ can be chosen of type $FP_{\infty}$.
\end{theorem}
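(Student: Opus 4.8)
\emph{Step 1 (construction of $M$).} Write $\chi_0\colon K\twoheadrightarrow K/N\simeq\mathbb{Z}_p$ and $\pi\colon G\twoheadrightarrow\Gamma$. From $G'\cap K\subseteq N$ one gets, on the one hand, $[G,K]\subseteq N$, so that the conjugation action of $G$ on $K/N$ is trivial and $\chi_0$ is $G$-invariant; on the other hand $K\cap G'\subseteq N$, so that $\chi_0$ factors through the image $\overline K\le G^{\mathrm{ab}}$ of $K$. Pushing out the extension of finitely generated $\mathbb{Z}_p$-modules $0\to\overline K\to G^{\mathrm{ab}}\to\Gamma^{\mathrm{ab}}\to 0$ along the induced map $\overline K\to\mathbb{Z}_p$ yields $0\to\mathbb{Z}_p\to P\to\Gamma^{\mathrm{ab}}\to 0$ with $\mathbb{Z}_p$ a non-torsion submodule of the finitely generated module $P$; composing $G\twoheadrightarrow G^{\mathrm{ab}}\to P$ with a projection of $P$ onto a torsion-free rank-one quotient that is nonzero on this $\mathbb{Z}_p$, and rescaling, we get a continuous epimorphism $\chi\colon G\twoheadrightarrow\mathbb{Z}_p$ with $\chi|_K$ a nonzero $\mathbb{Z}_p$-multiple of $\chi_0$. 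Then $M:=\ker\chi$ has $G/M\simeq\mathbb{Z}_p$ and $M\cap K=\ker(\chi|_K)=N$ (the kernel of a nonzero endomorphism of $K/N\simeq\mathbb{Z}_p$ is trivial). Crucially, for every $\phi\in\mathrm{Hom}(\Gamma,\mathbb{Z}_p)=\mathrm{Hom}_{\mathbb{Z}_p}(\Gamma^{\mathrm{ab}},\mathbb{Z}_p)$ the character $\chi_\phi:=\chi+\phi\circ\pi$ also restricts to $\chi|_K$, so $\ker\chi_\phi$ is again an admissible $M$; since $\Gamma^{\mathrm{ab}}$ is infinite, $\mathrm{Hom}(\Gamma,\mathbb{Z}_p)$ is infinite and the characters $\lambda_\phi\colon\Gamma\to\mathbb{F}_p((t))^\times$, $\gamma\mapsto(1+t)^{\phi(\gamma)}$, are pairwise distinct. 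This family of candidates is the flexibility we will spend in Step~3.

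\emph{Step 2 (homological reformulation).} By the standard equivalent formulation for pro-$p$ groups, $M$ is of type $FP_{n_0}$ iff $H_i(M,\mathbb{F}_p)$ is finite for $i\le n_0$. By Shapiro's lemma $H_i(M,\mathbb{F}_p)\simeq H_i(G,\mathbb{F}_p[[G/M]])$, and since $G$ is of type $FP_{n_0}$ and $\mathbb{F}_p[[G/M]]\simeq\mathbb{F}_p[[t]]$ is Noetherian, these are finitely generated $\mathbb{F}_p[[t]]$-modules for $i\le n_0$; such a module is finite precisely when it is torsion, i.e. when it dies after the flat base change $-\otimes_{\mathbb{F}_p[[t]]}\mathbb{F}_p((t))$. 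So it is enough to arrange $H_i(G,\mathbb{F}_p[[G/M]])\otimes_{\mathbb{F}_p[[t]]}\mathbb{F}_p((t))=0$ for all $i\le n_0$ by a good choice of $\chi$ (equivalently of $\phi$). (If $N$ were itself of type $FP_{n_0}$ this would be automatic; the content of the theorem is that $N$ is only assumed of type $FP_{n_0-1}$.) Note that, as $\chi|_K\ne0$, the subgroup $KM$ has finite index in $G$, so restricted to $K$ the module $\mathbb{F}_p[[G/M]]$ is a finite direct sum of copies of $\mathbb{F}_p[[K/N]]=\mathrm{Ind}_N^K\mathbb{F}_p$; hence $H_q(K,\mathbb{F}_p[[G/M]])$ is, up to harmless finite multiplicity, $H_q(N,\mathbb{F}_p)$ with its conjugation $K/N$-action. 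Because $K$ is of type $FP_{n_0}$ this is a finitely generated $\mathbb{F}_p[[t]]$-module for $q\le n_0$, and because $N$ is of type $FP_{n_0-1}$ it is moreover \emph{finite}, hence $\mathbb{F}_p[[t]]$-torsion, for $q\le n_0-1$.

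\emph{Step 3 (the spectral sequence).} Apply $-\otimes_{\mathbb{F}_p[[t]]}\mathbb{F}_p((t))$ to the Lyndon--Hochschild--Serre spectral sequence of $1\to K\to G\to\Gamma\to1$ with coefficients $\mathbb{F}_p[[G/M]]$; by flatness this is again a spectral sequence, with $E^2_{p,q}=H_p\bigl(\Gamma,\;H_q(K,\mathbb{F}_p[[G/M]])\otimes_{\mathbb{F}_p[[t]]}\mathbb{F}_p((t))\bigr)$. By Step~2 the rows $q\le n_0-1$ vanish, so in total degrees $\le n_0$ the abutment $H_i(G,\mathbb{F}_p[[G/M]])\otimes_{\mathbb{F}_p[[t]]}\mathbb{F}_p((t))$ is $0$ for $i<n_0$ and equals $W_\Gamma$ for $i=n_0$, where $W:=H_{n_0}(K,\mathbb{F}_p[[G/M]])\otimes_{\mathbb{F}_p[[t]]}\mathbb{F}_p((t))$. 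The key point is that $W$ is \emph{finite-dimensional over $\mathbb{F}_p((t))$}: computing $H_{n_0}(K,-)$ with a resolution of $\mathbb{F}_p$ by $\mathbb{F}_p[[K]]$-modules that are finitely generated free in degrees $\le n_0$ (available since $K$ is of type $FP_{n_0}$) exhibits $W$ as a subquotient of a free module of finite rank over the field $\mathbb{F}_p((t))$. Now as $\chi$ runs over the admissible family $\chi_\phi$ the vector space $W$ and its $\mathbb{F}_p((t))$-structure stay fixed, while the $\Gamma$-action gets twisted by $\lambda_\phi$; hence $W_\Gamma=(W\otimes\lambda_\phi)_\Gamma$ vanishes unless $\lambda_\phi^{-1}$ occurs as a subrepresentation of the finite-dimensional $\Gamma$-module $W^{*}$. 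Only finitely many characters occur in $W^{*}$, while the $\lambda_\phi$ are infinite in number and pairwise distinct, so we may choose $\phi$ with $W_\Gamma=0$. For that $\phi$, $H_i(M,\mathbb{F}_p)$ is finite for all $i\le n_0$, so $M:=\ker\chi_\phi$ is of type $FP_{n_0}$, with $G/M\simeq\mathbb{Z}_p$ and $M\cap K=N$. If in addition $K$, $G$, $N$ are of type $FP_\infty$, then $H_q(N,\mathbb{F}_p)$ is finite in every degree, the torsion-killing above applies in all degrees with no constraint on $\phi$, and any admissible $\chi$ gives an $M$ of type $FP_\infty$.

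\emph{Where the difficulty lies.} The substantive step is Step~3, and it carries the weight of the argument in two places: first, there is no honest topological $\mathbb{F}_p((t))$-coefficient system for a pro-$p$ group, so the ``inverting $t$'' must be carried out on the profinite-coefficient homology $H_*(-,\mathbb{F}_p[[G/M]])$ and its Iwasawa-module structure over $\mathbb{F}_p[[t]]$; second, the finite-dimensionality of $W$ over $\mathbb{F}_p((t))$ — which is exactly what makes the perturbation in $\phi$ work, and where the hypothesis that $K$ is of type $FP_{n_0}$ is genuinely used — has to be established carefully. This whole step is the pro-$p$ substitute for the BNS--$\Sigma$-invariant / Bieri--Strebel input used in the discrete analogues of \cite{K-V} and \cite{K-W}; there is no HNN/Bass--Serre shortcut, since an epimorphism onto $\mathbb{Z}_p$ is not an ascending HNN extension.
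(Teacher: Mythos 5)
Your proposal is essentially correct, but it takes a genuinely different route from the paper. The paper never perturbs the character on $G$ directly: it embeds the extension into the auxiliary pro-$p$ group $\Pi=\Pi_1\coprod_K\cdots\coprod_K\Pi_n$ with $\Pi_i=K\rtimes\langle s_i\rangle$ mapping onto $G$, uses King's invariant (Proposition~\ref{King-used}, feeding Proposition~\ref{quotientZp}) to tilt the character only inside $\Pi_1=K\rtimes\mathbb{Z}_p$, shows that $\ker\chi\le\Pi$ is of type $FP_{n_0}$ by a Mayer--Vietoris argument for proper pro-$p$ HNN extensions (Lemma~\ref{HNN}), and finally transfers the property to $\ker\mu\le G$ via the pro-$p$ Kuckuck-type Lemma~\ref{L1}. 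You instead stay inside $G$ (your Step 1 is fine), vary the character in the infinite family $\chi+\phi\circ\pi$, $\phi\in\mathrm{Hom}(\Gamma,\mathbb{Z}_p)$ --- the hypothesis that $\Gamma^{ab}$ is infinite playing here the role it plays in the paper's choice of $s_1$ --- and get genericity from a composition-factor count in a finite-dimensional $\mathbb{F}_p((t))$-representation; this replaces the King-invariant computation and avoids $\Pi$, Lemma~\ref{HNN} and Lemma~\ref{L1} altogether, at the price of invoking the LHS spectral sequence with coefficients in the nontrivial profinite module $\mathbb{F}_p[[G/M]]$ and Shapiro's lemma in the profinite category (the paper's detour is not wasted, though: the HNN structure and Lemma~\ref{HNN}(b) are exactly what the negative half of Corollary~\ref{cor1} uses, and your route would not supply them). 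Two points in your Step 3 need the routine repairs you half-anticipate. First, with $\mathbb{F}_p[[G/M_\phi]]$ the multiplicity $[G:KM_\phi]$ of $\mathbb{F}_p[[K/N]]$ in the restriction to $K$ varies with $\phi$, so to make ``same $W$, $\Gamma$-action twisted by $\lambda_\phi$'' literally true you should use the fixed coefficient module $\mathbb{F}_p[[\mathbb{Z}_p]]$ with $G$ acting through $\chi_\phi$; then $H_i(G,-)$ is a finite direct power of $H_i(M_\phi,\mathbb{F}_p)$, so the finiteness statements are unaffected, while the restriction to $K$ is genuinely independent of $\phi$ and the action of $g$ differs from the $\phi=0$ action by the central unit $(1+t)^{\phi(\pi(g))}$. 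Second, do not push $\otimes_{\mathbb{F}_p[[t]]}\mathbb{F}_p((t))$ inside $H_p(\Gamma,-)$, since the localized module is no longer profinite: it suffices to observe that for $q\le n_0-1$ the terms $E^2_{p,q}$ are killed by a power of $t$ (their coefficient modules $H_q(K,\cdot)\simeq H_q(N,\mathbb{F}_p)^{\oplus\,\mathrm{finite}}$ are finite), that the abutment is a finitely generated $\mathbb{F}_p[[t]]$-module in degrees $\le n_0$ because $G$ is of type $FP_{n_0}$, and to invert $t$ only on the single finitely generated module $E^2_{0,n_0}=H_0(\Gamma,H_{n_0}(K,\cdot))$, whose localization is a quotient of the algebraic coinvariants $W_\Gamma$, so your vanishing criterion applies. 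With these adjustments your Steps 1--3, including the counting step (at most $\dim_{\mathbb{F}_p((t))}W$ characters can occur as one-dimensional quotients of $W$, while the $\lambda_\phi$ are pairwise distinct and infinite in number) and your treatment of the $FP_\infty$ addendum, give a complete and somewhat more self-contained proof of Theorem~\ref{Main1}.
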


We call a discrete pro-$p$ character of $G$ a non-trivial homomorphism of pro-$p$ groups  $\alpha : G \to H$  such that $H \simeq \mathbb{Z}_p$.   Then the theorem could be restated as : assume that  $G$ and $K$ are of type $FP_{n_0}$, $\Gamma^{ab}$ is infinite and there is  a discrete pro-$p$ character $\alpha$ of $G$ such that $\alpha | _K \not= 0$, $Ker(\alpha) \cap K = N$  is of type $FP_{n_0-1}$. Then there exists 
a discrete pro-$p$ character $\mu$ of $G$ such that $M = Ker(\mu)$ is of type $FP_{n_0}$ and $\mu |_K = \alpha |_K$, in particular $M \cap K = N$.

\medskip

There is a lot in the literature on coherent abstract groups but very little is known for coherent pro-$p$ groups. Similar to the abstract case a pro-$p$ group $G$ is coherent (in the category of pro-$p$ groups) if every finitely generated pro-$p$ subgroup of $G$ is finitely presented as a pro-$p$ group i.e. is of type $FP_2$. We generalise this concept  and define that a pro-$p$ group $G$ is $n$-coherent if any pro-$p$ subgroup of $G$ that is of  type $FP_n$ is  of type $FP_{n+1}$.  Thus a pro-$p$ group is 1-coherent if and only if  it is coherent (in the category of pro-$p$ groups).

\begin{corollary} \label{cor1}  Let $K$, $\Gamma$ and $G = K \rtimes \Gamma$ be pro-$p$ groups, where $\Gamma$ is finitely generated free pro-$p$ but not pro-$p$ cyclic. Suppose that $K$ is of type $FP_{n_0 + 1}$ and there is a normal pro-$p$ subgroup $N$ of $K$ such that  $G'\cap K \subseteq N$, $K/ N \simeq \mathbb{Z}_p$ and $N$ is of type $FP_{n_0 -1}$ but is not of type $FP_{n_0}$.  Then there is a normal pro-$p$ subgroup $M$ of $G$ such that $G/ M \simeq \mathbb{Z}_p$, $M \cap K = N$ and $M$ is of type $FP_{n_0}$ but is not of type $FP_{n_0 + 1}$. In particular $G$ is not $n_0$-coherent.
\end{corollary}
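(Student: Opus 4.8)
The plan is to produce $M$ by applying Theorem~\ref{Main1} to the short exact sequence $1\to K\to G\to\Gamma\to 1$, and then to show, by a homological argument, that this $M$ cannot be of type $FP_{n_0+1}$. To apply Theorem~\ref{Main1} I would first observe that $\Gamma$, being finitely generated free pro-$p$, is of type $FP_\infty$, so $G=K\rtimes\Gamma$ is an extension with kernel $K$ of type $FP_{n_0}$ and quotient $\Gamma$ of type $FP_\infty$; hence $G$ is of type $FP_{n_0}$. Moreover $\Gamma^{ab}\simeq\mathbb{Z}_p^{d(\Gamma)}$ is infinite because $d(\Gamma)\geq 2$, and the remaining hypotheses $G'\cap K\subseteq N$, $K/N\simeq\mathbb{Z}_p$, $N$ of type $FP_{n_0-1}$ are part of the assumption. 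Theorem~\ref{Main1} then yields $M\trianglelefteq G$ with $G/M\simeq\mathbb{Z}_p$, $M\cap K=N$ and $M$ of type $FP_{n_0}$, which is the first half of the statement.

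Next I would record the structure carried by $M$. Set $L:=M/N$. As $K\trianglelefteq G$ and $M\cap K=N$, we get $L\simeq MK/K\leq\Gamma$; and $MK/M\simeq K/N\simeq\mathbb{Z}_p$ is a nontrivial closed subgroup of $G/M\simeq\mathbb{Z}_p$, hence of finite index, so $L$ has finite index in $\Gamma$ and, by the pro-$p$ Nielsen--Schreier theorem, $L$ is free pro-$p$ of rank $1+[\Gamma:L](d(\Gamma)-1)\geq 2$. Further, $M$ and $K$ are both normal in $H:=MK$ with $M\cap K=N$, so $N\trianglelefteq H$ and $H/N\simeq(M/N)\times(K/N)=L\times\mathbb{Z}_p$; consequently $V:=H_{n_0}(N,\mathbb{F}_p)$ is a profinite $\mathbb{F}_p[[L\times\mathbb{Z}_p]]$-module on which the actions of $L$ and of $\mathbb{Z}_p$ commute.

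Now suppose, for a contradiction, that $M$ is of type $FP_{n_0+1}$. Using that a pro-$p$ group is of type $FP_m$ exactly when its $\mathbb{F}_p$-homology is finite in degrees $\leq m$: the groups $H_i(N,\mathbb{F}_p)$ are finite for $i<n_0$ (as $N$ is of type $FP_{n_0-1}$), whereas $V=H_{n_0}(N,\mathbb{F}_p)$ is infinite (as $N$ is not of type $FP_{n_0}$). In the Lyndon--Hochschild--Serre spectral sequence of $1\to N\to M\to L\to 1$ with $\mathbb{F}_p$-coefficients, $\mathrm{cd}_p L=1$ forces degeneration into short exact sequences
\[
0\to H_0\bigl(L,H_n(N,\mathbb{F}_p)\bigr)\to H_n(M,\mathbb{F}_p)\to H_1\bigl(L,H_{n-1}(N,\mathbb{F}_p)\bigr)\to 0 .
\]
The case $n=n_0$ (using finiteness of $H_{n_0}(M,\mathbb{F}_p)$ and of $H_{n_0-1}(N,\mathbb{F}_p)$) gives $H_0(L,V)=V/I_LV$ finite, so $V$ is finitely generated over $\mathbb{F}_p[[L]]$ by the topological Nakayama lemma; the case $n=n_0+1$ (using finiteness of $H_{n_0+1}(M,\mathbb{F}_p)$) gives $H_1(L,V)$ finite, so $V$ is finitely presented over $\mathbb{F}_p[[L]]$. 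The same spectral sequence applied to $1\to N\to K\to\mathbb{Z}_p\to 1$ (here one needs only $K$ of type $FP_{n_0}$) gives $V/(\gamma-1)V$ finite, where $\gamma$ topologically generates $K/N\simeq\mathbb{Z}_p$, so $V$ is finitely generated over $\mathbb{F}_p[[\mathbb{Z}_p]]$.

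The hard part — and the step I expect to be the main obstacle — is to derive a contradiction from these three facts, i.e.\ to prove that an infinite $\mathbb{F}_p[[L\times\mathbb{Z}_p]]$-module that is finitely generated over $\mathbb{F}_p[[\mathbb{Z}_p]]$ is never finitely presented over $\mathbb{F}_p[[L]]$ when $L$ is free pro-$p$ of rank $r\geq 2$. For this I would use that $\mathbb{F}_p[[\mathbb{Z}_p]]\cong\mathbb{F}_p[[t]]$ (with $t=\gamma-1$) is a complete discrete valuation ring, so $V\cong\mathbb{F}_p[[t]]^{a}\oplus T$ with $T$ finite and, since $V$ is infinite, $a\geq 1$. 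Fixing a free basis $x_1,\dots,x_r$ of $L$ and the resolution $0\to\mathbb{F}_p[[L]]^{r}\to\mathbb{F}_p[[L]]\to\mathbb{F}_p\to 0$, one identifies $H_1(L,V)$ with $\ker\bigl(\delta\colon V^{r}\to V\bigr)$, $\delta(v_1,\dots,v_r)=\sum_i(x_i-1)v_i$. Since the $L$-action commutes with the $\mathbb{Z}_p$-action, $\delta$ is $\mathbb{F}_p[[\mathbb{Z}_p]]$-linear, and comparing torsion-free ranks over $\mathbb{F}_p[[\mathbb{Z}_p]]$ gives $\mathrm{rank}(\ker\delta)\geq\mathrm{rank}(V^{r})-\mathrm{rank}(V)=a(r-1)\geq 1$. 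Hence $H_1(L,V)=\ker\delta$ is infinite, contradicting the finite presentation of $V$ over $\mathbb{F}_p[[L]]$. Therefore $M$ is not of type $FP_{n_0+1}$, and, being a pro-$p$ subgroup of $G$ of type $FP_{n_0}$ but not $FP_{n_0+1}$, it witnesses that $G$ is not $n_0$-coherent.
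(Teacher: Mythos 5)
Your argument is correct, and its second half is genuinely different from the paper's. For the existence of $M$ you proceed exactly as the paper does, by feeding the hypotheses into Theorem~\ref{Main1}. For the failure of $FP_{n_0+1}$, however, the paper does not argue abstractly: it takes the specific $M=Ker(\chi)$ built inside the proof of Theorem~\ref{Main1}, views $G=\Pi_1\coprod_K\cdots\coprod_K\Pi_n$ as a proper pro-$p$ HNN extension over the base $A=\Pi_1\coprod_K\cdots\coprod_K\Pi_{n-1}$ with associated subgroup $K$, uses the fact (available only from that proof) that $M\cap A$ is of type $FP_{n_0}$, and then invokes Lemma~\ref{HNN}~b) (a Mayer--Vietoris argument for the HNN splitting) to force $N=M\cap K$ to be of type $FP_{n_0}$, a contradiction. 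You instead treat Theorem~\ref{Main1} as a black box and work with $V=H_{n_0}(N,\mathbb{F}_p)$, which is infinite since $N$ is $FP_{n_0-1}$ but not $FP_{n_0}$, carries commuting actions of $L=M/N\le\Gamma$ (free pro-$p$ of rank $\ge 2$, by the finite-index and Nielsen--Schreier observations, which are fine) and of $K/N\simeq\mathbb{Z}_p$ via $H/N\simeq L\times\mathbb{Z}_p$; the two-column LHS spectral sequences for $N\le K$ and $N\le M$ give finite generation of $V$ over $\mathbb{F}_p[[t]]$ and, if $M$ were $FP_{n_0+1}$, finiteness of $H_1(L,V)$, while your $\mathbb{F}_p[[t]]$-rank count on $\ker\bigl(\delta\colon V^r\to V\bigr)$ shows $H_1(L,V)$ is infinite. (The intermediate remark about finite presentation of $V$ over $\mathbb{F}_p[[L]]$ is not needed; the contradiction is already between the two statements about $H_1(L,V)$.) What your route buys is independence from the internal HNN bookkeeping: it shows that \emph{any} $M$ with $G/M\simeq\mathbb{Z}_p$ and $M\cap K=N$ fails to be of type $FP_{n_0+1}$, and it only uses $K$ of type $FP_{n_0}$; what the paper's route buys is brevity, since Lemma~\ref{HNN} and the amalgam structure are already in place from the proof of Theorem~\ref{Main1}, and it avoids the module-theoretic input (topological Nakayama, the structure theorem over $\mathbb{F}_p[[t]]$, and the explicit free resolution of $L$) that your argument needs to make rigorous.
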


As in the case of Theorem \ref{Main1}, Corollary \ref{cor1} can be restated in terms of discrete pro-$p$ characters. 
 
 For a free abstract  group its rank is  the minimal number of generators. 
Since for pro-$p$ groups $G$ finite rank is used for a notion different from the one addopted for abstract groups, we write $d(G)$ for the minimal number of generators of $G$.
It is known that abstract (free finite rank)-by-$\mathbb{Z}$ groups are coherent \cite{F-H}.    There is a conjecture suggested by Wise and independently by Kropholler and Walsh that an abstract (free of finite rank $\geq 2$)-by-(free of finite rank $\geq 2$) group is incoherent, see \cite{K-W}.
In \cite{K-W} Kropholler and Walsh proved that (free of rank 2)-by-(free of finite rank $\geq 2$) abstract group is incoherent. The proof uses significantly that for a free abstract group $F_2$ of rank 2 we have that $Out(F_2) \simeq GL_2(\mathbb{Z})$ and some explicit calculations with a finite generating set of a subgroup of finite index in $GL_2(\mathbb{Z})$ were used. Such an approach would not work for pro-$p$ groups since by Romankov's result in \cite{R} the authomorphism group of a free pro-$p$ group $G$, where $2 \leq d(G) < \infty$, is not finitely generated as a topological group.
Still a pro-$p$ version of the Kropholler-Walsh result holds and it is a particular case of Corollary  \ref{CorMain*} that follows from the following quite general theorem.

\begin{theorem} \label{ThmMain2} 
Let $G = K \rtimes \Gamma$ be a pro-$p$ group with $K$ a finitely presented pro-$p$ group  such that there is a normal pro-$p$ subgroup $N$ of $K$ such that $K/ N \simeq \mathbb{Z}_p$ and $N$ is not finitely generated, $\Gamma$ a finitely generated free pro-$p$ group with $d(\Gamma) \geq 2$.
Then $G$ is incoherent (in the category of pro-$p$ groups).
\end{theorem}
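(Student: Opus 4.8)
The plan is to deduce the statement from Corollary \ref{cor1} (used with $n_0 = 1$) applied not to $G$ itself but to a subgroup of the form $K \rtimes \Delta$, where $\Delta$ is a free pro-$p$ subgroup of $\Gamma$ of rank $2$ that acts trivially on $K^{ab}$. Note first that it suffices to find an incoherent pro-$p$ subgroup of $G$, since a finitely generated pro-$p$ subgroup that is not finitely presented stays finitely generated and not finitely presented inside any larger pro-$p$ group.

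First I would produce $\Delta$. As $K$ is finitely generated, $K^{ab}$ is a finitely generated $\mathbb{Z}_p$-module, so $\Aut(K^{ab})$ is virtually $\GL_r(\mathbb{Z}_p)$ for some $r$, in particular a compact $p$-adic analytic group. The conjugation action of $\Gamma$ on $K$ induces a continuous homomorphism $\rho\colon \Gamma \to \Aut(K^{ab})$; put $\Gamma_1 = \ker\rho$. Being a closed subgroup of a free pro-$p$ group, $\Gamma_1$ is free pro-$p$. I claim $d(\Gamma_1) \geq 2$. If not, then $\Gamma_1$ is trivial or $\cong \mathbb{Z}_p$, hence compact $p$-adic analytic; and $\rho(\Gamma) \cong \Gamma/\Gamma_1$, being a closed subgroup of the compact $p$-adic analytic group $\Aut(K^{ab})$, is compact $p$-adic analytic; since the class of compact $p$-adic analytic groups is closed under extensions, $\Gamma$ would be compact $p$-adic analytic — impossible, because a free pro-$p$ group of rank $\geq 2$ has open subgroups with arbitrarily large minimal number of generators and so is not of finite rank. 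Therefore $d(\Gamma_1) \geq 2$, and I may take $\Delta \leq \Gamma_1$ to be a free pro-$p$ subgroup of rank $2$; by construction $\Delta$ acts trivially on $K^{ab}$.

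Next, put $G_1 = K \rtimes \Delta \leq G$. Since $\Delta$ acts trivially on $K^{ab}$, every continuous homomorphism $K \to \mathbb{Z}_p$ is $\Delta$-invariant; in particular, if $\chi_0 \colon K \to K/N \cong \mathbb{Z}_p$ is the quotient map, then $\chi_0(\delta k \delta^{-1}) = \chi_0(k)$ for all $\delta \in \Delta$ and $k \in K$. Hence $\theta\colon G_1 \to \mathbb{Z}_p$, $\theta(k\delta) = \chi_0(k)$, is a well-defined continuous homomorphism with $\theta|_K = \chi_0$; in particular $G_1' \subseteq \ker\theta$, so $G_1' \cap K \subseteq \ker\theta \cap K = \ker\chi_0 = N$. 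Now $G_1 = K \rtimes \Delta$ meets the hypotheses of Corollary \ref{cor1} with $n_0 = 1$: $\Delta$ is finitely generated free pro-$p$ and not pro-$p$ cyclic; $K$ is of type $FP_2$, being finitely presented; and $N$ is a normal pro-$p$ subgroup of $K$ with $G_1' \cap K \subseteq N$, $K/N \cong \mathbb{Z}_p$, of type $FP_0$ but not of type $FP_1$ (it is not finitely generated). Corollary \ref{cor1} then yields a pro-$p$ subgroup $M$ of $G_1$, hence of $G$, that is of type $FP_1$ but not of type $FP_2$, i.e. finitely generated but not finitely presented; so $G$ is incoherent.

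The hard part is the claim $d(\Gamma_1) \geq 2$, and this is exactly where the pro-$p$ setting is used in an essential way. While $\Aut(K)$ is unwieldy — it need not even be topologically finitely generated, by Romankov's theorem — the \emph{linearized} action $\rho$ lands in the comparatively small $p$-adic analytic group $\Aut(K^{ab})$, which forces its kernel $\Gamma_1$ to be large. The write-up should record the standard facts used along the way: that the automorphism group of a finitely generated $\mathbb{Z}_p$-module is compact $p$-adic analytic even in the presence of torsion, that closed subgroups of free pro-$p$ groups are free pro-$p$, and that compact $p$-adic analytic groups are closed under extensions.
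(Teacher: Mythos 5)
Your proposal is correct and follows essentially the same route as the paper's proof of Theorem \ref{ThmMain2}: linearize the conjugation action of $\Gamma$ on the abelianization of $K$, use $p$-adic analyticity of the image to extract a rank-two free pro-$p$ subgroup acting trivially, and then apply Corollary \ref{cor1} with $n_0=1$ to $K\rtimes\Delta$. The only cosmetic difference is that the paper acts on $K^{ab}/\mathrm{tor}(K^{ab})$ and deduces that $\ker\rho$ is a nontrivial normal (hence non-procyclic) subgroup from the bound on generators in $p$-adic analytic groups, whereas you work with all of $\Aut(K^{ab})$ and rule out a procyclic kernel via closure of compact $p$-adic analytic groups under extensions.
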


 The class  of pro-$p$ groups $\mathcal{L}$ was first considered by the author and Zalesskii in \cite{K-Z}.
 This class of groups contains all finitely generated free pro-$p$ groups. It shares many properties with the class of abstract limit groups and is defined using extensions of centralizers. There are many open questions about the class of pro-$p$ groups  $\mathcal{L}$. For example, by Wilton's result from \cite{W} every finitely generated subgroup of an abstract limit group is a virtual retract, but the pro-$p$ version of this result is still an open problem. In order to prove Corollary \ref{CorMain*} we show in Proposition \ref{abel} that the abelianization of any non-trivial pro-$p$ group from $\mathcal{L}$ is always infinite.
The same argument can be addapted for the class of abstract limit groups.

\begin{corollary} \label{CorMain*}
Let $G = K \rtimes \Gamma$ be a pro-$p$ group with $K$ a non-abelian pro-$p$ group from the class $\mathcal{L}$, $\Gamma$ a finitely generated free pro-$p$ group with $d(\Gamma) \geq 2$.
Then $G$ is incoherent (in the category of pro-$p$ groups). In particular if $K$ is a finitely generated free pro-$p$ group  with $d(K) \geq 2$ then  $G$ is incoherent (in the category of pro-$p$ groups).
\end{corollary}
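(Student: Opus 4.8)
The plan is to deduce Corollary~\ref{CorMain*} directly from Theorem~\ref{ThmMain2}. For that it suffices to verify two things about the factor $K$: that it is finitely presented as a pro-$p$ group, and that it possesses a normal pro-$p$ subgroup $N$ with $K/N \simeq \mathbb{Z}_p$ and $N$ not finitely generated as a pro-$p$ group. Finite presentability of $K$ is a structural property of the class $\mathcal{L}$: every member of $\mathcal{L}$ is finitely generated, and in fact finitely presented (indeed of type $FP_\infty$), which follows from the inductive description of $\mathcal{L}$ by iterated extensions of centralizers along free abelian pro-$p$ groups together with the fact that such an extension preserves finite presentability; this is recorded in \cite{K-Z}. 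In particular a finitely generated free pro-$p$ group of rank $\geq 2$ lies in $\mathcal{L}$ and is non-abelian, so the ``in particular'' clause will follow from the general statement.

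To construct $N$, I would first invoke Proposition~\ref{abel}: since $K$ is a non-trivial member of $\mathcal{L}$, its abelianization $K^{ab}$ is infinite. As $K$ is finitely generated, $K^{ab}$ is a finitely generated $\mathbb{Z}_p$-module, and being infinite it admits a continuous epimorphism onto $\mathbb{Z}_p$. Composing $K \twoheadrightarrow K^{ab} \twoheadrightarrow \mathbb{Z}_p$ yields a discrete pro-$p$ character $\alpha \colon K \to \mathbb{Z}_p$; put $N = \mathrm{Ker}(\alpha)$. Then $N$ is a normal pro-$p$ subgroup of $K$ with $K/N \simeq \mathbb{Z}_p$. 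Moreover $N \neq 1$, because $K$ is non-abelian and hence not isomorphic to $\mathbb{Z}_p$, and $N$ has infinite index in $K$.

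The crucial point is that $N$ is \emph{not} finitely generated as a pro-$p$ group. Here I would use the pro-$p$ analogue of the Bridson--Howie--Miller--Short theorem established in \cite{K-Z}: in a non-abelian pro-$p$ group belonging to $\mathcal{L}$, every non-trivial finitely generated normal pro-$p$ subgroup is of finite index. Since $N$ is non-trivial, normal, and of infinite index, it cannot be finitely generated. (For the special case in which $K$ is free pro-$p$ of rank $\geq 2$ one can argue more directly: the kernel of a continuous epimorphism from a free pro-$p$ group of rank $\geq 2$ onto $\mathbb{Z}_p$ is a free pro-$p$ group of infinite rank, so it is not finitely generated.) With $N$ in hand, $K$, $N$ and $\Gamma$ satisfy all the hypotheses of Theorem~\ref{ThmMain2}, whence $G = K \rtimes \Gamma$ is incoherent in the category of pro-$p$ groups.

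Granting the cited structural facts about $\mathcal{L}$ from \cite{K-Z}, the real content of this deduction is Proposition~\ref{abel} --- the assertion that a non-trivial pro-$p$ group in $\mathcal{L}$ has infinite abelianization --- which is where I expect the work to lie; once that and the pro-$p$ Bridson--Howie--Miller--Short statement are in place, the corollary is a short argument. A secondary point needing care is checking that finite presentability is genuinely preserved under each extension-of-centralizers step used to build the groups in $\mathcal{L}$, so that $K$ is indeed of type $FP_2$ as required by Theorem~\ref{ThmMain2}.
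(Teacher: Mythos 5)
Your proposal is correct and follows essentially the same route as the paper: Proposition~\ref{abel} gives the infinite abelianization and hence a normal $N$ with $K/N\simeq\mathbb{Z}_p$, the normal-subgroup theorem for pro-$p$ groups in $\mathcal{L}$ from \cite{K-Z} (the pro-$p$ Bridson--Howie--Miller--Short analogue, part of the main theorem there) shows $N$ is not finitely generated, and Theorem~\ref{ThmMain2} then applies. Your extra care about finite presentability of $K$ (type $FP_\infty$ for groups in $\mathcal{L}$, also from \cite{K-Z}) is a point the paper leaves implicit.
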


For a finite rank free pro-$p$ group $F$ the structure of $Aut(F)$ was studied first by Lubotsky in \cite{L}. $Aut(F)$ is a topological group with a pro-$p$ subgroup of finite index. In \cite{G} Gordon proved that the automorphism group of an abstract free group of rank 2 is incoherent. Unfortunately we could not prove  a pro-$p$ version of this result but still it would hold if  the group of outer pro-$p$ automophisms of a free pro-$p$ group of rank 2 contains a free non-procyclic pro-$p$ subgroup. For a free abstract group $F_2$ of rank 2 we have that $Out(F_2) \simeq GL_2(\mathbb{Z})$ and since $SL_2(\mathbb{Z})$ is isomorphic to the free amalgamated product of $C_4$ and $C_6$ over a copy of $C_2$ it follows easily that $GL_2(\mathbb{Z})$ contais a free non-cyclic abstract group (or use the Tits alternative), hence $Out(F_2)$ contains a free non-cyclic abstract group.  Nevertheless the group $GL_2^1(\mathbb{Z}_p) = Ker (GL_2(\mathbb{Z}_p) \to GL_2(\mathbb{F}_p))$ does not contain a free pro-$p$ non-procyclic pro-$p$ subgroup, since it is $p$-adic analytic and so there is an upper limit on the number of generators of finitely generated pro-$p$ subgroups \cite{DSMS}. For related results on non-existance of free pro-$p$ subgroups in matrix groups see \cite{B-L},  \cite{BE-Z}, \cite{Z}.

Let $G$ be a finitely generated pro-$p$ group. Define $Aut_0(G) = Ker (Aut(G) \to Aut(G/ G^*))$, where $G^*$ is the Frattini subgroup of $G$. Then $Aut_0(G)$ is a pro-$p$ subgroup of $Aut(G)$ of finite index.

\begin{corollary} \label{aut}
Suppose that $K$ is a free pro-$p$ group with $d(K) = 2$. If $Out(K)$ contains  a pro-$p$  free non-procyclic subgroup then 
$Aut_0(K)$ is incoherent (in the category of pro-$p$ groups).
\end{corollary}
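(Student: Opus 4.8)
The plan is to locate inside $Aut_0(K)$ a closed pro-$p$ subgroup of the form $K\rtimes\Gamma$ satisfying the hypotheses of Corollary \ref{CorMain*}, and then to observe that incoherence is inherited by overgroups.

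First I would set up the relevant extension. As $K$ is free pro-$p$ of rank $2$ it has trivial centre, so $Inn(K)\cong K$; moreover every inner automorphism acts trivially on the elementary abelian Frattini quotient $K/K^*$, so $Inn(K)\subseteq Aut_0(K)$. Passing to outer automorphisms yields a short exact sequence of pro-$p$ groups
\[
1\longrightarrow K\longrightarrow Aut_0(K)\longrightarrow Q\longrightarrow 1,
\]
where $Q$ denotes the image of $Aut_0(K)$ in $Out(K)$. Since the natural map $Aut(K)\to GL_2(\mathbb F_p)=Aut(K/K^*)$ has kernel $Aut_0(K)$ and is surjective (lift a basis of $\mathbb F_p^2$), the index $[Aut(K):Aut_0(K)]$ is finite, and it equals $[Out(K):Q]$ because $Inn(K)\subseteq Aut_0(K)\subseteq Aut(K)$. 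Thus $Q$ is a closed finite-index subgroup of $Out(K)$.

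Now suppose $S\le Out(K)$ is a free non-procyclic pro-$p$ subgroup. Then $\Gamma_0:=S\cap Q$ is a closed finite-index subgroup of $S$, hence free pro-$p$ by the pro-$p$ Nielsen--Schreier theorem, and of rank $\ge 2$ by the Schreier index formula $d(\Gamma_0)-1=[S:\Gamma_0]\,(d(S)-1)$ (if $d(S)$ is infinite, replace $\Gamma_0$ by the closed subgroup generated by two members of a basis). This produces a finitely generated free pro-$p$ subgroup $\Gamma\le Q$ with $d(\Gamma)\ge 2$. Let $H$ be the full preimage of $\Gamma$ in $Aut_0(K)$; since $Aut_0(K)$ is pro-$p$, $H$ is a closed pro-$p$ subgroup, and it fits in an extension $1\to K\to H\to\Gamma\to 1$. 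Because $\Gamma$ is free pro-$p$, it is projective in the category of pro-$p$ groups, so this extension splits and $H\cong K\rtimes\Gamma$.

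Since $K$ is a non-abelian finitely generated free pro-$p$ group with $d(K)=2$ (in particular a member of the class $\mathcal L$) and $\Gamma$ is finitely generated free pro-$p$ with $d(\Gamma)\ge 2$, Corollary \ref{CorMain*} shows that $H$ is incoherent in the category of pro-$p$ groups: it contains a finitely generated pro-$p$ subgroup $L$ that is not finitely presented. As $L$ is then a closed, finitely generated, non-finitely-presented pro-$p$ subgroup of $Aut_0(K)$ as well, $Aut_0(K)$ is incoherent, which is the claim. The steps requiring care -- rather than a genuine obstacle -- are checking that $Q$ is a closed finite-index subgroup of $Out(K)$ (so that intersecting with $S$ keeps a free pro-$p$ subgroup of rank $\ge 2$) and the splitting via projectivity of $\Gamma$; all the substantive content is already packaged inside Corollary \ref{CorMain*}.
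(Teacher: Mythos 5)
Your proof is correct and follows essentially the same route as the paper: pull the free non-procyclic pro-$p$ subgroup of $Out(K)$ back through the extension $1\to Inn(K)\to Aut(K)\to Out(K)\to 1$, split it using projectivity of free pro-$p$ groups to get a copy of $K\rtimes\Gamma$, and invoke Corollary \ref{CorMain*} (equivalently Theorem \ref{ThmMain2}), noting that incoherence is inherited by overgroups. Your extra care in first passing to the finite-index image $Q$ of $Aut_0(K)$ in $Out(K)$ (via the Schreier index formula) so that the witness subgroup visibly lies in $Aut_0(K)$ rather than just in $Aut(K)$ is a welcome refinement of the paper's terser argument, but not a different method.
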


By the Bieri-Strebel results in \cite{B-S} for a finitely presented abstract group $H$ that does not contain free non-cyclic abstract subgroups, every metabelian quotient of $H$ is finitely presented. It is an open question whether a pro-$p$ version of the Bieri-Strebel result  holds i.e. whether if $G$ is a finitely presented pro-$p$ group without free non-procyclic pro-$p$ subgroups then every metabelian pro-$p$ quotient of $G$ is finitely presented as a pro-$p$ group. Note that by the King classfication of the finitely presented metabelian pro-$p$ groups in \cite{King2} every pro-$p$ quotient of a finitely presented metabelian pro-$p$ group is finitely presented pro-$p$.  Using Corollary \ref{aut} and some ideas introduced by Romankov in  \cite{R0}, \cite{R} we prove the following result.

\begin{corollary} \label{alternative}
Suppose that $K$ is a free pro-$p$ group  with $d(K) = 2$.  Then either 
$Aut_0(K)$ is incoherent (in the category of pro-$p$ groups) or the pro-$p$ version of the Bieri-Strebel result does not hold.
\end{corollary}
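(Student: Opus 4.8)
\emph{Proof plan.} The plan is to establish the equivalent statement: \emph{if $Aut_0(K)$ is coherent in the pro-$p$ category, then the pro-$p$ version of the Bieri--Strebel result fails.} Write $K=\langle x,y\rangle$, a free pro-$p$ group of rank $2$. Inner automorphisms act trivially on the Frattini quotient $K/K^{*}$, so $Inn(K)\cong K$ lies in $Aut_0(K)$; let $q\colon Aut_0(K)\twoheadrightarrow Out_0(K)=Aut_0(K)/Inn(K)$ be the projection. First I would dispose of the case in which $Out(K)$ contains a free non-procyclic pro-$p$ subgroup: then Corollary~\ref{aut} already gives that $Aut_0(K)$ is incoherent, so the asserted dichotomy holds in that case. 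Hence we may and do assume that $Out(K)$ — and therefore every pro-$p$ subgroup of $Out_0(K)$ — has no free non-procyclic pro-$p$ subgroup.

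Next I would show that the coherence hypothesis descends from $Aut_0(K)$ to $Out_0(K)$: every finitely generated pro-$p$ subgroup $H\leq Out_0(K)$ is finitely presented as a pro-$p$ group. Indeed, the full preimage $\widetilde H=q^{-1}(H)$ is topologically generated by finitely many lifts of generators of $H$ together with the two generators $i_x,i_y$ of $Inn(K)\cong K$, hence is finitely generated, hence finitely presented by coherence of $Aut_0(K)$. Since $Inn(K)$ is normal in $\widetilde H$ and, being isomorphic to the rank-$2$ free pro-$p$ group $K$, is normally generated in itself (and therefore in $\widetilde H$) by $i_x,i_y$, the quotient $H=\widetilde H/Inn(K)$ is a quotient of a finitely presented pro-$p$ group by the normal closure of two elements, so $H$ is finitely presented.

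The heart of the argument, where Romankov's work enters, is to produce a finitely generated pro-$p$ subgroup $H\leq Out_0(K)$ admitting a surjection onto a metabelian pro-$p$ group $Q$ that is \emph{not} finitely presented as a pro-$p$ group. Following Romankov's analysis of generators of the automorphism group of a free pro-$p$ group in \cite{R0},\cite{R}, one would take $H$ generated by (the outer class of) a suitable automorphism of infinite order playing the role of an acting copy of $\mathbb{Z}_p$, together with finitely many $IA$-type automorphisms, arranged so that the induced action on the relevant abelian section of $Out_0(K)$ exhibits $H$ as mapping onto a module extension whose defining module over $\mathbb{Z}_p[[\mathbb{Z}_p]]$ is not finitely presented; non-finite presentability of the resulting metabelian $Q$ is then certified by King's $\Sigma$-invariant for metabelian pro-$p$ groups \cite{King}. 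Granting such an $H$, the conclusion follows: by the first paragraph $H$ has no free non-procyclic pro-$p$ subgroup, by the second paragraph $H$ is finitely presented, yet $H$ surjects onto the non-finitely-presented metabelian pro-$p$ group $Q$ — precisely a failure of the pro-$p$ Bieri--Strebel statement.

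The main obstacle is the construction in the third paragraph: extracting from Romankov's work a \emph{finitely generated} pro-$p$ subgroup of $Out_0(K)$ (equivalently, a subgroup of $Aut_0(K)$ meeting $Inn(K)$ trivially) together with an explicit metabelian quotient, and then verifying — via King's criterion — that this quotient genuinely fails to be finitely presented as a pro-$p$ group. By contrast, the two reductions (the appeal to Corollary~\ref{aut} in the free-subgroup case, and the descent of coherence to $Out_0(K)$) are formal.
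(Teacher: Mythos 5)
Your outer framework is sound, and it is essentially the contrapositive of the paper's own argument: where you descend coherence from $Aut_0(K)$ to $Out_0(K)$ (preimage of a finitely generated subgroup is finitely generated, hence finitely presented; kill the normal closure of the two generators of $Inn(K)\simeq K$), the paper runs the same extension $1\to K\to H_0\to \widetilde H\to 1$ in the other direction to pull incoherence back from $Out_0(K)$ to $Aut_0(K)$. Those reductions, and the initial dichotomy via Corollary~\ref{aut}, are fine.

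The genuine gap is your third paragraph, which is not a construction but a statement of what must be constructed; it is precisely the content of the paper's Lemma~\ref{final}, and it is the mathematical substance of the corollary. As written, your sketch ("a suitable infinite-order outer automorphism plus finitely many $IA$-type automorphisms acting on the relevant abelian section of $Out_0(K)$") cannot be carried out directly in $Out_0(K)$: there is no visible abelian section of $Out_0(K)$ on which such a two-generated subgroup is seen to surject onto a non-finitely-presented metabelian pro-$p$ group, and recall that $Aut(K)$ is not even topologically finitely generated (Romankov). The paper instead passes to the maximal metabelian quotient $M=K/K''$ (characteristic, so Lubotzky's surjectivity $Aut(K)\twoheadrightarrow Aut(M)$ later lifts everything to $Out_0(K)$), uses Romankov's Bachmuth-type embedding $\beta\colon IAut(M)\to GL_2(\mathbb{Z}_p[[M^{ab}]])$ and the determinant to manufacture an abelian quotient $P=1+\Delta$ of $IAut(M)$, chooses $\varphi_1$ with $\det\beta(\varphi_1)=1+ps_1$ and $\varphi_2=\rho^p$ with $\rho(x_1)=x_1x_2$, computes that conjugation by $\varphi_2^k$ acts on the image of $\varphi_1$ by the substitution $s_1\mapsto (1+s_1)(1+s_2)^{pk}-1$, deduces that the resulting two-generated pro-$p$ group is $\mathbb{F}_p[[t]]\rtimes\mathbb{Z}_p$, which is not finitely presented by King's classification (Theorem~\ref{KK}, i.e.\ the example following it, from \cite{King2}, not \cite{King}), and finally checks that this group meets the image of $Inn(M)$ trivially, so that it really is a metabelian quotient of a two-generated subgroup $H\leq Out(M)$, which then pulls back to a finitely generated subgroup of $Out_0(K)$ with the same non-finitely-presented metabelian quotient. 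None of these steps (passage to $M$, the determinant trick, the explicit computation of the action, the King verification, and the $Inn(M)$-disjointness needed to land in $Out$) is supplied or even identified in your proposal, so the proof is incomplete at its central point.
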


{\bf Acknowledgments} The author was partially supported by Bolsa de produtividade em pesquisa CNPq 305457/2021-7 and Projeto tem\'atico FAPESP 18/23690-6.
 
\section{Preliminaries}

\subsection{Homological finiteness properties for pro-$p$ groups} \label{homological-pro-p}
Let $G$ be a pro-$p$ group.
By definition $$\mathbb{Z}_p[[G]] = {\varprojlim} \frac{\mathbb{Z}}{p^i \mathbb{Z}} [[G/ U]],$$ where the inverse limit is over all $i \geq 1$ and $U$ open subgroups of $G$. And $$\mathbb{F}_p[[G]] =\mathbb{Z}_p[[G]]/ p \mathbb{Z}_p[[G]] =  {\varprojlim} \mathbb{F}_p[[G/U]]$$
where the inverse limit is over all open subgroups $U$ of $G$.

 By definition $G$ is of type $FP_m$ if the trivial $\mathbb{Z}_p[[G]]$-module $\mathbb{Z}_p$ has a projective resolution where all projectives in dimension $\leq m$ are finitely generated $\mathbb{Z}_p[[G]]$-modules.
By \cite{King} for a pro-$p$ group the following conditions are equivalent : 

1) $G$ is of type $FP_m$;

2) $H_i(G, \mathbb{Z}_p)$ is a finitely generated (abelian) pro-$p$ group for $i \leq m$;

3) $H_i(G, \mathbb{F}_p)$ is finite for $i \leq m$;

4) for $K$ either $\mathbb{F}_p$ or $\mathbb{Z}_p$ and $N$  a normal pro-$p$ subgroup of $G$ such that $K[[G]]$ is left and right Noetherian the homology groups $H_i(N, K)$ are finitely generated as $K[[G/N]]$-modules for $i \leq m$, where the $G/N$ action is induced by the conjugation action of $G$ on $N$.

 The equivalence of the above conditions is a corollary of the fact that $\mathbb{Z}_p[[G]]$ and $\mathbb{F}_p[[G]]$ are local rings.
Here $H_i(N, \mathbb{Z}_p)$ and  $H_i(N, \mathbb{F}_p)$ are the standard homology groups of pro-$p$ groups with coeficients in the trivial  pro-$p$ $\mathbb{Z}_p[[G]]$-modules $\mathbb{Z}_p$ and $\mathbb{F}_p$, for more on homology groups see \cite{R-Z}.

\subsection{The King invariant}

Let $Q$ be a finitely generated abelian pro-$p$ group and
let $\mathbb{F}$ be the algebraic closure of $\mathbb{F}_p$. Denote by $\mathbb{F}[[t]]^{\times}$ the multiplicative group of invertible elements in $\mathbb{F} [[t]]$. Consider
$$T(Q) = \{ \chi : Q \to \mathbb{F}[[t]]^{\times} \ | \ \chi \hbox{ is a continuous homomorphism} \},$$
where $\mathbb{F}[[t]]^{\times} $ is a topological group with topology induced by the topology of the ring $\mathbb{F}[[t]]$, given by the sequence of ideals $(t) \supseteq (t^2) \supseteq \ldots \supseteq (t^i) \supseteq \ldots $. Note that since $\chi$ is continuous we have that $\chi(Q) \subset 1 + t \mathbb{F}[[t]]$.

For $\chi \in T(Q)$ there is a unique continious ring homomorphism
$$\overline{\chi} : \mathbb{Z}_p[[Q]] \to \mathbb{F}[[t]]$$
that extends $\chi$.

Let $A$ be a finitely generated pro-$p$ $\mathbb{Z}_p[[Q]]$-module. In \cite{King2} King defined
the following invariant
$$ \Delta(A) = \{  \chi \in T(Q) \ | \ ann_{\mathbb{Z}_p[[Q]]}(A) \subseteq Ker (\overline{\chi})  \}. $$
In \cite{King2} King used the notation $\Xi(A)$, that we here substitute by $\Delta(A)$.

Let $P$ be a pro-$p$ subgroup of $Q$. Define $T(Q, P) = \{ \chi \in T(Q) \ | \ \chi(P) = 1 \}$.

\begin{theorem} \cite[Thm B]{King2},  \cite[Lemma 2.5]{King2} \label{fin-gen-module}  Let $Q$ be a finitely generated abelian pro-$p$ group.
Let $A$ be a finitely generated pro-$p$ $\mathbb{Z}_p[[Q]]$-module.

a) Then $A$ is finitely generated as an abelian pro-$p$ group if and only if $\Delta(A) = \{ 1 \}$.

b) If $P$ is  a pro-$p$ subgroup of $Q$ then $T(Q, P) \cap \Delta(A) = \Delta(A/ [A, P]).$ In particular $A$ is finitely generated as a pro-$p$ $\mathbb{Z}_p[[P]]$-module if and only if $T(Q, P) \cap \Delta(A) = \{1 \}$.
\end{theorem}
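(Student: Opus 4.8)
The plan is to translate King's invariant into the language of supports of modules over the commutative Noetherian local ring $R:=\mathbb{Z}_p[[Q]]$, and then to analyse which primes of $R$ are realised by characters. Write $\mathfrak m$ for the maximal ideal of $R$ (the kernel of the augmentation $R\to\mathbb{F}_p$), and for $\chi\in T(Q)$ put $\mathfrak p_\chi:=\mathrm{Ker}(\overline\chi)$, a prime ideal of $R$ because $\mathbb{F}[[t]]$ is a domain; note $p\in\mathfrak p_\chi$ for every $\chi$ since $\mathbb{F}[[t]]$ has characteristic $p$. Assuming $A\ne0$ and using that $A$ is finitely generated over the Noetherian ring $R$, so $\mathrm{Supp}_R(A)=V(\mathrm{ann}_R A)$, the definition of $\Delta$ reads
\[
\Delta(A)=\{\chi\in T(Q)\ :\ \mathfrak p_\chi\in\mathrm{Supp}_R(A)\}.
\]
The trivial character gives $\mathfrak p_1=\mathfrak m$, and I would record two basic facts. (i) $\mathfrak p_\chi=\mathfrak m$ forces $\chi=1$: a nontrivial $\chi$ sends some free generator $g_i$ of $Q$ to $1+v_i$ with $0\ne v_i\in t\mathbb{F}[[t]]$, so $\overline\chi(g_i-1)=v_i\ne0$ and $R/\mathfrak p_\chi$ is infinite, whereas $R/\mathfrak m=\mathbb{F}_p$. (ii) For a closed subgroup $P\le Q$ with augmentation ideal $I_P:=(g-1:g\in P)R$, one has $I_P\subseteq\mathfrak p_\chi$ iff $\chi(P)=1$, i.e. $\chi\in T(Q,P)$.

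For part (a) I would reduce modulo $p$. By topological Nakayama a finitely generated $\mathbb{Z}_p$-module is detected by $A/pA$, so $A$ is finitely generated over $\mathbb{Z}_p$ iff $\bar A:=A/pA$ is finite, iff $\mathrm{Supp}_{\bar R}(\bar A)=\{\bar{\mathfrak m}\}$ over $\bar R:=R/pR=\mathbb{F}_p[[Q]]$ (a finitely generated nonzero module over a Noetherian local ring has support the closed point exactly when it is finite over the residue field). On the other hand every $\mathfrak p_\chi$ contains $p$, and by the tensor formula $\mathrm{Supp}(A\otimes_R R/pR)=\mathrm{Supp}(A)\cap V(p)$ I may read $\Delta(A)$ off the support of $\bar A$ over $\bar R$. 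Combined with fact (i), $\Delta(A)=\{1\}$ says precisely that no \emph{nontrivial} character has $\mathfrak p_\chi\in\mathrm{Supp}_{\bar R}(\bar A)$. Thus (a) is equivalent to the geometric assertion: $\mathrm{Supp}_{\bar R}(\bar A)$ contains a non-maximal prime if and only if some nontrivial $\chi$ has $\mathfrak p_\chi$ in it.

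The substance, and the step I expect to be the main obstacle, is the realisation claim: every non-maximal prime $\mathfrak q$ of $\bar R$ is contained in $\mathfrak p_\chi$ for some nontrivial $\chi\in T(Q)$. Granting it, the backward direction above is immediate (pick such a $\chi$ for a non-maximal $\mathfrak q\in\mathrm{Supp}(\bar A)$; then $\mathfrak p_\chi\supseteq\mathfrak q\supseteq\mathrm{ann}(\bar A)$ lies in the support and $\chi\ne1$), and the forward direction is formal. To prove the claim I would use Cohen's structure theory. Characters and primes kill the torsion subgroup $F\le Q$, since $1+t\mathbb{F}[[t]]$ is torsion-free, so both factor through $\bar R_{\mathrm{red}}\cong\mathbb{F}_p[[x_1,\dots,x_r]]$ with $x_i=g_i-1$; I may assume $\mathfrak q$ lives there. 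Then $D:=\mathbb{F}_p[[x_1,\dots,x_r]]/\mathfrak q$ is a complete local domain of dimension $\ge1$ with residue field $\mathbb{F}_p$. Choosing a prime $\mathfrak q_1\supseteq\mathfrak q$ with $\dim D/\mathfrak q_1=1$ and passing to the integral closure of the one-dimensional complete local domain $D/\mathfrak q_1$, which is module-finite over it and, being an equicharacteristic complete DVR, isomorphic to $\mathbb{F}_{p^s}[[u]]$ by Cohen, I obtain a nonconstant local $\mathbb{F}_p$-algebra map
\[
\psi:\ \mathbb{F}_p[[x_1,\dots,x_r]]\twoheadrightarrow D\longrightarrow \mathbb{F}_{p^s}[[u]]\hookrightarrow\mathbb{F}[[t]]\qquad(u\mapsto t),
\]
using $\mathbb{F}_{p^s}\subseteq\mathbb{F}$. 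Locality forces $v_i:=\psi(x_i)\in t\mathbb{F}[[t]]$, and dimension one gives some $v_i\ne0$; hence $g_i\mapsto 1+v_i$, $F\mapsto1$, defines a nontrivial $\chi\in T(Q)$ whose ring extension is $\psi$, so $\mathfrak p_\chi\supseteq\mathfrak q$. The delicate inputs are finiteness of the normalisation (complete local rings are excellent, hence Nagata) and the equicharacteristic Cohen theorem; the algebraically closed field $\mathbb{F}$ enters precisely to house the residue field $\mathbb{F}_{p^s}$ of the arc. This proves (a).

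Finally, part (b) is a formal consequence. With $[A,P]=I_PA$, the module $A/[A,P]=A\otimes_R R/I_P$ is finitely generated over $R$ and is annihilated by $I_P$, so fact (ii) gives $\Delta(A/[A,P])\subseteq T(Q,P)$; moreover for $\chi\in T(Q,P)$ the tensor formula yields $\mathrm{Supp}(A/[A,P])=\mathrm{Supp}(A)\cap V(I_P)$ with $\mathfrak p_\chi\in V(I_P)$ automatic, whence $\chi\in\Delta(A/[A,P])\iff\mathfrak p_\chi\in\mathrm{Supp}(A)\iff\chi\in\Delta(A)$. Combining, $T(Q,P)\cap\Delta(A)=\Delta(A/[A,P])$. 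For the last sentence, topological Nakayama over the local ring $\mathbb{Z}_p[[P]]$ shows that $A$ is finitely generated over $\mathbb{Z}_p[[P]]$ iff $A/[A,P]$ is finitely generated over $\mathbb{Z}_p$; applying part (a) to the finitely generated $R$-module $A/[A,P]$ rephrases this as $\Delta(A/[A,P])=\{1\}$, that is, $T(Q,P)\cap\Delta(A)=\{1\}$.
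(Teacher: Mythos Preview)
The paper does not give its own proof of this statement: it is quoted verbatim from King's work (Theorem~B and Lemma~2.5 of \cite{King2}) and used as a black box in the proof of Proposition~\ref{King-used}. So there is no in-paper argument to compare against.

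That said, your proof is correct and self-contained. The translation $\Delta(A)=\{\chi:\mathfrak p_\chi\in\mathrm{Supp}_R(A)\}$ is exactly right, and the reduction of part~(a) to the realisation claim---every non-maximal prime of $\mathbb{F}_p[[x_1,\dots,x_r]]$ lies under the kernel of some nontrivial character---is clean. Your proof of the realisation claim via Cohen structure theory (cut down to a height-$1$ prime, normalise to get a complete equicharacteristic DVR $\mathbb{F}_{p^s}[[u]]$, then embed in $\mathbb{F}[[t]]$) is the natural commutative-algebra route and all the ingredients you cite (excellence of complete local rings giving module-finite normalisation, henselianity giving locality of the integral closure, Cohen's theorem for the coefficient field) are standard and correctly invoked. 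Part~(b) is indeed formal once (a) is in hand, and your Nakayama reduction over $\mathbb{Z}_p[[P]]$ is fine because $A$ is compact. One small cosmetic point: in fact~(i) you speak of ``free generators $g_i$ of $Q$'' before having passed to the torsion-free quotient; it would be cleaner to first note (as you do later) that every $\chi$ kills the torsion of $Q$, so one may assume $Q\simeq\mathbb{Z}_p^r$ from the outset.
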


We state  the classification of the finitely presented metabelian pro-$p$ groups given by King in \cite{King2}.

\begin{theorem} \cite{King2} \label{KK} Let $1 \to A \to G \to Q \to 1$ be a short exact sequence of  pro-$p$ groups, where $G$ is a finitely generated pro-$p$ group and $A$ and $Q$ are abelian pro-$p$ groups. Then $G$ is a finitely presented pro-$p$ group if and only if $\Delta(A) \cap \Delta(A)^{-1} = \{ 1 \}$.
\end{theorem}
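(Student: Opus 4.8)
The plan is to prove this as the pro-$p$ incarnation of the Bieri--Strebel finite presentation theorem, reading the condition $\Delta(A)\cap\Delta(A)^{-1}=\{1\}$ as the ``no antipodal pair'' ($2$-tameness) condition and extracting it from a single homological computation. Since finite presentability coincides with type $FP_2$ for pro-$p$ groups (Section \ref{homological-pro-p}), it suffices to decide when $G$ is $FP_2$, i.e. when $H_1(G,\mathbb{Z}_p)$ and $H_2(G,\mathbb{Z}_p)$ are finitely generated abelian pro-$p$ groups. The first holds exactly because $G$ is finitely generated, which is part of the hypothesis, so the whole content is the finiteness of $H_2$. As a preliminary I would record that $G$ finitely generated forces $A$ to be a finitely generated $\mathbb{Z}_p[[Q]]$-module: the five-term exact sequence of $1\to A\to G\to Q\to 1$ places $A_Q=H_0(Q,A)$ in an exact sequence $H_2(Q,\mathbb{Z}_p)\to A_Q\to H_1(G,\mathbb{Z}_p)$ whose outer terms are finitely generated ($Q$ being finitely generated abelian), so $A_Q$ is finitely generated, and the pro-$p$ Nakayama lemma over the local ring $\mathbb{Z}_p[[Q]]$ upgrades finite generation of $A_Q=A\otimes_{\mathbb{Z}_p[[Q]]}\mathbb{Z}_p$ to finite generation of $A$. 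This is precisely the setting in which $\Delta(A)$ is defined and in which Theorem \ref{fin-gen-module} applies.

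Next I would run the Lyndon--Hochschild--Serre spectral sequence $E^2_{ij}=H_i(Q,H_j(A,\mathbb{Z}_p))\Rightarrow H_{i+j}(G,\mathbb{Z}_p)$ and isolate the single term that can fail to be finitely generated. Because $A$ is abelian, $H_0(A,\mathbb{Z}_p)=\mathbb{Z}_p$, $H_1(A,\mathbb{Z}_p)=A$, and the essential part of $H_2(A,\mathbb{Z}_p)$ is the exterior square $\wedge^2 A$ with its diagonal $Q$-action (torsion in $A$ contributes only terms linear in $A$, hence finitely generated over $\mathbb{Z}_p[[Q]]$). The terms $E^2_{2,0}=H_2(Q,\mathbb{Z}_p)$ and $E^2_{1,1}=H_1(Q,A)$ are finitely generated: the first since $Q$ is finitely generated abelian, the second because $A$ is a finitely generated $\mathbb{Z}_p[[Q]]$-module and the Koszul homology of $Q$ with finitely generated coefficients is annihilated by the augmentation ideal, hence finitely generated over $\mathbb{Z}_p$. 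Consequently $H_2(G,\mathbb{Z}_p)$ is finitely generated if and only if the diagonal coinvariants $E^2_{0,2}=(\wedge^2 A)_Q$ are finitely generated; and since $\wedge^2 A$ and the full tensor square $A\,\widehat{\otimes}_{\mathbb{Z}_p}\,A$ carry the same diagonal character data, this is in turn equivalent to finite generation of $M_D:=(A\,\widehat{\otimes}_{\mathbb{Z}_p}\,A)_Q$. The theorem thus reduces to the assertion
\[
M_D \ \text{finitely generated} \iff \Delta(A)\cap\Delta(A)^{-1}=\{1\}.
\]

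To prove this equivalence I would pass to the ambient group $Q\times Q$ and its diagonal subgroup $D=\{(q,q):q\in Q\}$. The completed tensor square $M=A\,\widehat{\otimes}_{\mathbb{Z}_p}\,A$ is a finitely generated $\mathbb{Z}_p[[Q\times Q]]$-module, and I would establish the product formula $\Delta(M)=\Delta(A)\times\Delta(A)$ inside $T(Q\times Q)=T(Q)\times T(Q)$, coming from the fact that a character $(\chi_1,\chi_2)$ kills the annihilator of $M$ exactly when $\chi_1$ and $\chi_2$ each kill the annihilator of $A$. The diagonal coinvariants are $M_D=M/[M,D]$, a finitely generated module over $\mathbb{Z}_p[[(Q\times Q)/D]]\cong\mathbb{Z}_p[[Q]]$, so by Theorem \ref{fin-gen-module}(a) it is finitely generated as an abelian pro-$p$ group if and only if $\Delta(M/[M,D])=\{1\}$. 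Now Theorem \ref{fin-gen-module}(b) gives $\Delta(M/[M,D])=T(Q\times Q,D)\cap\Delta(M)$, and since $(\chi_1,\chi_2)$ is trivial on $D$ exactly when $\chi_1\chi_2=1$, one has $T(Q\times Q,D)=\{(\chi,\chi^{-1}):\chi\in T(Q)\}$; intersecting with $\Delta(A)\times\Delta(A)$ leaves precisely $\{(\chi,\chi^{-1}):\chi\in\Delta(A)\cap\Delta(A)^{-1}\}$, which is trivial if and only if $\Delta(A)\cap\Delta(A)^{-1}=\{1\}$. This yields the displayed equivalence and hence both directions of the theorem simultaneously.

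The main obstacle, and the step requiring genuine care, is the product formula $\Delta(A\,\widehat{\otimes}_{\mathbb{Z}_p}\,A)=\Delta(A)\times\Delta(A)$: one must control the annihilator of a completed tensor product over $\mathbb{Z}_p[[Q\times Q]]$, where the inclusion $\mathrm{ann}(A)\,\widehat{\otimes}\,\mathbb{Z}_p[[Q]]+\mathbb{Z}_p[[Q]]\,\widehat{\otimes}\,\mathrm{ann}(A)\subseteq\mathrm{ann}(M)$ is immediate but the reverse containment up to the closure implicit in the definition of $\Delta$ needs the structure of King's evaluation maps $\overline{\chi}$ and the multiplicativity of $(\chi_1,\chi_2)\mapsto\overline{(\chi_1,\chi_2)}$. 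The secondary technical points are the identification of the infinite part of $H_2(A,\mathbb{Z}_p)$ with $\wedge^2 A$ under the diagonal action (including the matching of $\wedge^2 A$ with the full tensor square and the torsion bookkeeping) and the compatibility between $\Delta$ computed over $\mathbb{Z}_p[[Q]]$ and the characteristic-$p$ data actually seen by the maps $\overline{\chi}$; these I expect to be routine once the product formula is in place.
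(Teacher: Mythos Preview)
The paper does not prove Theorem~\ref{KK}; it is quoted verbatim from King \cite{King2} as a classification result and used as a black box (only the example immediately following it is worked out). So there is no ``paper's own proof'' to compare your proposal against.

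Your outline is essentially a reconstruction of King's original argument (which in turn is the pro-$p$ transcription of the Bieri--Strebel theorem): reduce to finiteness of $H_2$, use the LHS spectral sequence to isolate the diagonal coinvariants of the tensor/exterior square, and then read off the antipodal condition via the invariant. That strategic shape is correct. The place where your sketch is genuinely incomplete is exactly the one you flag: the product formula $\Delta(A\,\widehat{\otimes}_{\mathbb{Z}_p}\,A)=\Delta(A)\times\Delta(A)$. In King's setting $\Delta$ is defined through the evaluation maps $\overline{\chi}:\mathbb{Z}_p[[Q]]\to\mathbb{F}[[t]]$ into a characteristic-$p$ target, so the annihilator of a completed tensor product over $\mathbb{Z}_p$ is not literally the sum of the two one-sided annihilator ideals, and the argument has to pass through $\mathbb{F}_p$ or through support loci rather than raw annihilators. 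King handles this via his structure theory for $\Delta$ (and the companion paper \cite{King}); your sketch would need a comparable lemma to close. A secondary point: for an abelian pro-$p$ group $A$, the identification of $H_2(A,\mathbb{Z}_p)$ with $\wedge^2 A$ up to ``torsion bookkeeping'' is more delicate than in the discrete case (think of $A=\mathbb{Z}_p$ with nontrivial $H_2(\mathbb{Z}_p,\mathbb{F}_p)$), and you should either work with $\mathbb{F}_p$ coefficients throughout or be explicit about how the torsion-free rank argument survives.
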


{\bf Example} Let $A = \mathbb{F}_p[[s]]$,  $Q = \mathbb{Z}_p $, $G = A \rtimes Q$, where $\mathbb{Z}_p$ has a generator $b$ and $b$ acts via conjugation on $A$ by multiplication with $1 + s$. Since $ann_{\mathbb{Z}_p[[Q]]} (A) = p \mathbb{Z}_p[[Q]] \subseteq Ker(\overline{\chi})$ for any $\chi \in T(Q)$, we conclude that $\Delta(A) = T(Q)  = \Delta(A) ^{-1}$. Hence by Theorem \ref{KK} $G$ is not finitely presented.

\section{Proofs}

We start by citing a result on abstract groups. We recall first that an abstract group $G$ is of type $FP_m$ if the trivial $\mathbb{Z} G$-module $\mathbb{Z}$ has a projective resolution where all projectives in dimension $\leq m$ are finitely generated. An abstract group $G$ is of (homotopical) type $F_n$ if there is a classifying space $K(G, 1)$ with finite $n$-skeleton. If $n \geq 2$ then $G$ is of type $F_n$ if and only if it is of type $FP_n$ and is finitely presented ( as an abstract group).
 The homotopical part of Proposition \ref{Kuck} was proved by Kuckuck in \cite{K} and the homological part of Proposition \ref{Kuck} was proved by    the author and Lima in \cite{K-L}. The former has a geometric proof and the latter an algebraic one. 

\begin{proposition} \cite{K}, \cite{K-L} \label{Kuck}  Let $n \geq 1$ be a natural number, $A \hookrightarrow B \twoheadrightarrow C$ a short exact sequence of groups with  $A$ of type $F_n$ ( resp. of type $FP_n$) and $C$ of type $F_{n+1}$ (resp. of type $FP_{n+1}$). Assume there is another short exact sequence of groups $A \hookrightarrow B_0 \twoheadrightarrow C_0$ with $B_0$ of type $F_{n+1}$ (resp. of type $FP_{n+1}$) and that there is a group homomorphism $\theta: B_0 \rightarrow B$ such that $\theta|_A = id_A$,  i.e. there is a commutative diagram of homomorphisms of groups  $$\xymatrix{A \ \ar@{^{(}->}[r] \ar[d]_{id_A} & B_0 \ar@{->>}[r]^{\pi_0} \ar[d]_{\theta} & C_0 \ar@{.>}[d]^{\nu} \\ A \ \ar@{^{(}->}[r] & B \ar@{->>}[r]^{\pi} & C}$$  Then $B$ is of type $F_{n+1}$ ( resp. of type $FP_{n+1}$).
\end{proposition}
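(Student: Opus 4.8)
The plan is to prove the homological statement in detail (the homotopical one being its mirror image, with ``$K(G,1)$ having finite $(n+1)$-skeleton'' in place of ``$\mathbb{Z}G$-projective resolution of $\mathbb{Z}$ with finitely generated terms in degrees $\le n+1$''). First I would dispose of the base case: since $C$ is of type $FP_{n+1}$ it is of type $FP_n$, and an extension of a group of type $FP_n$ by a group of type $FP_n$ is again of type $FP_n$ --- assemble a $\mathbb{Z}B$-projective resolution of $\mathbb{Z}$ from a $\mathbb{Z}C$-resolution of $\mathbb{Z}$ and (the induction up of) a $\mathbb{Z}A$-resolution of $\mathbb{Z}$, both finitely generated up to degree $n$, via the usual double complex (Lyndon--Hochschild--Serre) construction. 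So $B$ is already of type $FP_n$ and the whole issue is to upgrade this to $FP_{n+1}$.

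For the upgrade I would use the standard criterion: a group $G$ of type $FP_n$ is of type $FP_{n+1}$ if and only if, for one (hence every) finitely generated free partial resolution $P_\bullet \to \mathbb{Z}$ over $\mathbb{Z}G$ with $P_i$ finitely generated for $i\le n$, the $(n+1)$st syzygy $\Sigma = \ker(P_n \to P_{n-1})$ is finitely generated over $\mathbb{Z}G$ (equivalently, $H^{n+1}(G,-)$ commutes with arbitrary direct products). Continuing the double-complex construction but now only up to degree $n+1$, the failure of $A$ to be of type $FP_{n+1}$ enters in exactly one place: the degree-$(n+1)$ ``fibre'' summand $\mathbb{Z}B\otimes_{\mathbb{Z}A}\bar Q_{n+1}$ of the resolution of $B$ --- equivalently, on the level of the LHS spectral sequence of $A\hookrightarrow B\twoheadrightarrow C$ with product coefficients, the row $H^{n+1}(A,-)$, since all lower rows vanish and all of $H^{\le n+1}(C,-)$ commutes with products because $A$ is $FP_n$ and $C$ is $FP_{n+1}$. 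So everything reduces to showing that the possibly non-finitary behaviour of $H^{n+1}(A,-)$ does not contaminate $H^{n+1}(B,-)$.

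Here is where $B_0$ enters. The hypothesis that $B_0$ is of type $FP_{n+1}$ says precisely that for $A\hookrightarrow B_0\twoheadrightarrow C_0$ this contamination does not happen, and $\theta|_A=\mathrm{id}_A$ means the two extensions share the normal subgroup $A$ on the nose, so that $\theta$ induces a morphism of LHS spectral sequences which is the identity on the $H^{n+1}(A,-)$-row. Concretely I would take a $\mathbb{Z}B_0$-projective resolution $S_\bullet\to\mathbb{Z}$ finitely generated up to degree $n+1$, use $\theta|_A=\mathrm{id}_A$ to build a chain map over $\theta$ from the fibre part of $S_\bullet$ to the fibre part of the resolution $P_\bullet$ of $B$, and take the images of the finitely many degree-$(n+1)$ generators of $S_{n+1}$ as candidate generators of $\Sigma$; together with the generators coming from the already finitely generated (base) part of $P_{n+1}$, I would argue these generate $\Sigma$, whence $B$ is of type $FP_{n+1}$.

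The hard part --- the step using the hypotheses essentially --- is exactly this last claim, that the finitely many elements imported from $B_0$ really do generate $\Sigma$. Since $\theta$ is in general neither injective nor surjective, $\mathbb{Z}B$ need not be flat over $\mathbb{Z}B_0$, so one cannot simply base-change $S_\bullet$; instead one must check, syzygy class by syzygy class, that $\theta|_A=\mathrm{id}_A$ forces the fibre-direction generators of $S_{n+1}$ to surject onto a generating set of the fibre-direction part of $\Sigma$, the base-direction part being covered by $C$ being $FP_{n+1}$. In its geometric incarnation this is Kuckuck's argument in \cite{K}: model $C$ by a $K(C,1)$ with finite $(n+1)$-skeleton, build the associated $K(B,1)$ as a fibration with fibre a $K(A,1)$ having finite $n$-skeleton, observe that the only obstruction to a finite $(n+1)$-skeleton lies among the $(n+1)$-cells of the fibre over the basepoint, and kill the corresponding (a priori infinitely generated) family of $\pi_n$-classes using the finitely many $(n+1)$-cells of a $K(B_0,1)$ with finite $(n+1)$-skeleton, mapped in by $\theta$ so that it covers the fibre $K(A,1)$ by the identity.
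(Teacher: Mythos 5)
There is a genuine gap. Your reduction (build a resolution of $\mathbb{Z}$ over $\mathbb{Z}B$ from the $A$- and $C$-directions, note $B$ is already of type $FP_n$, and observe that the only possible failure of $FP_{n+1}$ sits in the fibre contribution in degree $n+1$) is the right first half, but the second half — that the finitely many degree-$(n+1)$ generators imported from a resolution over $\mathbb{Z}B_0$ via a chain map over $\theta$ generate the syzygy $\Sigma$ — is exactly the content of the proposition, and you do not prove it: you state it, call it the hard part, and then point to Kuckuck's geometric argument in \cite{K}, which is circular here since Proposition \ref{Kuck} \emph{is} the cited result. As you yourself note, $\mathbb{Z}B$ is not flat (nor otherwise well behaved) over $\mathbb{Z}B_0$ through $\theta$, so a chain map over $\theta$ gives no a priori control of images in $\Sigma$; the claim that the ``fibre-direction'' generators of $S_{n+1}$ surject onto the ``fibre-direction part'' of $\Sigma$ is not even cleanly formulated, because base and fibre directions mix in the syzygy. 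The mechanism that actually closes this gap — visible in the paper's pro-$p$ analogue, Lemma \ref{L1}, and in the algebraic proof in \cite{K-L} — is naturality of the LHS spectral sequence together with the elementary but decisive observation that the comparison map on the corner term
$$H_0\bigl(C_0, H_{n+1}(A, -)\bigr) \longrightarrow H_0\bigl(C, H_{n+1}(A, -)\bigr)$$
is \emph{always} surjective, because $\nu$ factors as an epimorphism followed by a monomorphism and coinvariants can only get smaller when the acting group grows; since all other terms contributing to degree $n+1$ are controlled by $A$ being $FP_n$ and $C$ being $FP_{n+1}$, the finiteness condition transfers from $B_0$ to $B$ through this surjection. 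Your sketch mentions the induced morphism of spectral sequences but never isolates or proves this surjectivity, which is precisely where the hypothesis $\theta|_A=\mathrm{id}_A$ is used.

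Two smaller points. In the abstract (non-pro-$p$) setting you cannot work with finiteness of homology with field coefficients, so the spectral-sequence argument must be run with product coefficients (Bieri--Brown criterion) or phrased via finite generation of syzygies, and that bookkeeping is part of what \cite{K-L} actually does; your sketch acknowledges this but does not carry it out. Also, the parenthetical criterion ``$G$ of type $FP_n$ is of type $FP_{n+1}$ if and only if $H^{n+1}(G,-)$ commutes with arbitrary direct products'' is wrong as stated: cohomology always commutes with products; the correct finitary characterizations involve $H^{n+1}(G,-)$ and filtered colimits, or $H_{n+1}(G,-)$ and direct products. The syzygy formulation you give alongside it is the correct one to use.
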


We prove a pro-$p$ version of the above proposition. Recall that the property $FP_m$ for pro-$p$ groups was discussed in Section \ref{homological-pro-p}.

\begin{lemma} \label{L1}  Let $n \geq 1$ be a natural number, $A \hookrightarrow B \twoheadrightarrow C$ a short exact sequence of pro-$p$ groups with  $A$ of type $FP_n$ and $C$ of type $FP_{n+1}$. Assume there is another short exact sequence of pro-$p$ groups $A \hookrightarrow B_0 \twoheadrightarrow C_0$ with $B_0$ of type $FP_{n+1}$ and that there is a homomorphism of pro-$p$ groups $\theta: B_0 \rightarrow B$ such that $\theta|_A = id_A$,  i.e. there is a commutative diagram of homomorphisms of pro-$p$ groups  $$\xymatrix{A \ \ar@{^{(}->}[r] \ar[d]_{id_A} & B_0 \ar@{->>}[r]^{\pi_0} \ar[d]_{\theta} & C_0 \ar@{.>}[d]^{\nu} \\ A \ \ar@{^{(}->}[r] & B \ar@{->>}[r]^{\pi} & C}$$  Then $B$ is of type $FP_{n+1}$.
\end{lemma}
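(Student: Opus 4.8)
The plan is to imitate the homological proof of Proposition \ref{Kuck} due to the author and Lima, working throughout with $\mathbb{F}_p$-coefficients and using the characterisation (see Section \ref{homological-pro-p}) that a pro-$p$ group $H$ is of type $FP_m$ if and only if $H_i(H,\mathbb{F}_p)$ is finite for all $i\le m$. Consider the Lyndon--Hochschild--Serre spectral sequence of the extension $A\hookrightarrow B\twoheadrightarrow C$,
\[
E^2_{p,q}=H_p\!\left(C,H_q(A,\mathbb{F}_p)\right)\ \Longrightarrow\ H_{p+q}(B,\mathbb{F}_p).
\]
Since $A$ is of type $FP_n$, $H_q(A,\mathbb{F}_p)$ is finite for $q\le n$; since $C$ is of type $FP_{n+1}$ (hence of type $FP_n$), $H_p(C,-)$ of a finite $\mathbb{F}_p[[C]]$-module is finite for $p\le n+1$. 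Hence every $E^2_{p,q}$ with $p+q\le n$ is finite, so $H_i(B,\mathbb{F}_p)$ is finite for $i\le n$, and the whole problem reduces to proving that $H_{n+1}(B,\mathbb{F}_p)$ is finite.

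On the antidiagonal $p+q=n+1$: for $p\ge 1$ the term $E^2_{p,n+1-p}=H_p(C,H_{n+1-p}(A,\mathbb{F}_p))$ is finite, because $n+1-p\le n$ makes $H_{n+1-p}(A,\mathbb{F}_p)$ finite and $p\le n+1$ with $C$ of type $FP_{n+1}$ makes $H_p(C,-)$ finite on it; thus $E^\infty_{p,n+1-p}$ is finite for every $p\ge 1$. As $H_{n+1}(B,\mathbb{F}_p)$ is a finite iterated extension of the groups $E^\infty_{p,n+1-p}$, $0\le p\le n+1$, it is finite precisely when the corner term $E^\infty_{0,n+1}$ is finite. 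Note $E^2_{0,n+1}=H_{n+1}(A,\mathbb{F}_p)_{C}$, so this corner term is exactly where the failure of $A$ to be of type $FP_{n+1}$ could obstruct $B$, and this is where the auxiliary extension and $\theta$ come in.

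The commutative square in the statement is a morphism of extensions that is the identity on $A$, so it induces a morphism of Lyndon--Hochschild--Serre spectral sequences $f\colon \bar E\to E$, where $\bar E^2_{p,q}=H_p(C_0,H_q(A,\mathbb{F}_p))\Rightarrow H_{p+q}(B_0,\mathbb{F}_p)$, compatible with $\theta_*\colon H_*(B_0,\mathbb{F}_p)\to H_*(B,\mathbb{F}_p)$. Because $\theta|_A=\mathrm{id}_A$, conjugation by $b_0\in B_0$ and by $\theta(b_0)$ agree on $A$, so the $C_0$-action on $H_{n+1}(A,\mathbb{F}_p)$ factors through $\nu$; consequently the page-$2$ map $f^2_{0,n+1}\colon H_{n+1}(A,\mathbb{F}_p)_{C_0}\to H_{n+1}(A,\mathbb{F}_p)_{C}$ is the canonical projection of coinvariants, hence surjective. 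In the zeroth column there are no outgoing differentials, so for both spectral sequences $E^r_{0,n+1}$ (resp. $\bar E^r_{0,n+1}$) is successively a quotient of $E^2_{0,n+1}$ (resp. $\bar E^2_{0,n+1}$), and $f$ respects these quotient maps; chasing the resulting commutative square shows that $f^\infty_{0,n+1}\colon \bar E^\infty_{0,n+1}\to E^\infty_{0,n+1}$ is surjective. Finally $B_0$ is of type $FP_{n+1}$, so $H_{n+1}(B_0,\mathbb{F}_p)$ is finite, whence its lowest filtration piece $\bar E^\infty_{0,n+1}$ is finite, and therefore so is its quotient $E^\infty_{0,n+1}$. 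This forces $H_{n+1}(B,\mathbb{F}_p)$ to be finite, so $B$ is of type $FP_{n+1}$.

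The step I expect to require the most care is the propagation of surjectivity from $f^2_{0,n+1}$ to $f^\infty_{0,n+1}$: it hinges on the fact that the $p=0$ column of a first-quadrant spectral sequence receives differentials but emits none, so the passage from page $2$ to page $\infty$ there is a composite of surjections, and one must verify the compatibility of the two towers under $f$. One should also confirm that the pro-$p$ Lyndon--Hochschild--Serre spectral sequence with finite coefficients is available with the needed functoriality for morphisms of extensions; this is standard (cf.\ \cite{R-Z}). The remaining finiteness bookkeeping and the closure of type $FP_n$ under $FP_n$-by-$FP_n$ extensions are routine.
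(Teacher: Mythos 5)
Your proof is correct and takes essentially the same route as the paper: both run the $\mathbb{F}_p$-coefficient LHS spectral sequences of the two extensions, reduce the whole problem to finiteness of the corner term $\widehat{E}^\infty_{0,n+1}$ (the other terms on the antidiagonal being finite since $A$ is $FP_n$ and $C$ is $FP_{n+1}$), and use that $\theta|_A=\mathrm{id}_A$ forces the $C_0$-action on $H_{n+1}(A,\mathbb{F}_p)$ to factor through $\nu$, so the induced map on coinvariants is surjective and the corner term is a quotient of a finite group supplied by $B_0$ being of type $FP_{n+1}$. The only cosmetic differences are that the paper packages the last step via the edge maps $\mu_0,\mu$ into $H_{n+1}(B_0,\mathbb{F}_p)$ and $H_{n+1}(B,\mathbb{F}_p)$ and checks surjectivity of the coinvariants map by factoring $\nu$ as an epimorphism followed by a monomorphism, and it proves the finiteness of $H_i(C,M)$ for finite modules $M$ by passing to a finite-index subgroup acting trivially, a standard fact you assert outright.
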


\begin{proof}
Consider the LHS-spectral sequence $$E_{i,j}^2 = H_i(C_0, H_j(A, \mathbb{F}_p))$$ that converges to $H_{i+ j} (B_0, \mathbb{F}_p)$. Similarly there is the LHS spectral sequence
 $$\widehat{E}_{i,j}^2 = H_i(C, H_j(A, \mathbb{F}_p))$$ that converges to $H_{i+ j} (B, \mathbb{F}_p)$.
 
 Since $A$ is of type $FP_n$ we have that $H_j(A, \mathbb{F}_p)$ is finite for all $j \leq n$. Then there is a pro-$p$ subgroup $C_1$ of finite index in $C$ such that $C_1$ acts trivially on $H_j(A, \mathbb{F}_p)$  for every $j \leq n$. Since $C$ is of type $FP_{n+1}$ we have that $C_1$ is of type $FP_{n+1}$. Then $$H_i(C_1, H_j(A, \mathbb{F}_p)) \simeq \oplus H_i(C_1, \mathbb{F}_p)\hbox{ is finite for }j \leq n, i \leq n+1,$$ where we have $dim_{\mathbb{F}_p}  H_j(A, \mathbb{F}_p)$ direct summands. Since $C_1$ has finite index in $C$ we deduce that $$\widehat{E}_{i,j}^2 =  H_i(C, H_j(A, \mathbb{F}_p))\hbox{ is finite  for }j \leq n, i \leq n+1,$$ hence by the convergence of the second spectral sequence we obtain that $$H_k(B, \mathbb{F}_p) \hbox{ is finite for }k \leq n.$$ Note that we have shown that if $i + j = n+1 , i \not= 0$ then $\widehat{E}_{i,j}^2$ is finite, hence $\widehat{E}_{i,j}^{\infty}$ is finite.  By the convergence of the spectral sequence there is a filtration of $H_{n+1}(B, \mathbb{F}_p)$
 $$
 0 = F_{-1}(H_{n+1}(B, \mathbb{F}_p)) \subseteq  \ldots \subseteq  F_{i}(H_{n+1}(B, \mathbb{F}_p)) \subseteq  F_{i+1}(H_{n+1}(B, \mathbb{F}_p))$$ $$ \subseteq \ldots \subseteq  F_{n+1}(H_{n+1}(B, \mathbb{F}_p)) = H_{n+1}(B, \mathbb{F}_p)
 $$
 where $ F_{i}(H_{n+1}(B, \mathbb{F}_p)) /  F_{i-1}(H_{n+1}(B, \mathbb{F}_p)) \simeq  \widehat{E}_{i, n+1 - i}^{ \infty}$.
 Thus $$H_{n+1}(B, \mathbb{F}_p) \hbox{ is finite if and only if  }\widehat{E}_{0,n+1}^{\infty} \hbox{ is finite.}$$ Note that since any differential that comes out from $\widehat{E}^r_{0, n+1}$ is zero  we have that $\widehat{E}_{0,n+1}^{\infty}$ is a quotient of $\widehat{E}^2_{0, n+1} = H_0(C, H_{n+1}(A, \mathbb{F}_p))$, thus there is a map
 $$\mu : H_0(C, H_{n+1}(A, \mathbb{F}_p)) \to H_{n+1}(B, \mathbb{F}_p)$$ 
 with image that equals $\widehat{E}^{ \infty}_{0, n+1}$. Thus $B$ is of type $FP_{n+1}$ if and only if $Im (\mu)$ is finite.
 
 Similarly there is a map
 $$\mu_0 : H_0(C_0, H_{n+1}(A, \mathbb{F}_p)) \to H_{n+1}(B_0, \mathbb{F}_p)$$ 
 with image that equals ${E}^{ \infty}_{0, n+1}$ and such that $B_0$ is of type $FP_{n+1}$ if and only if $Im (\mu_0)$ is finite. Since $B_0$ is of type $FP_{n+1}$  we conclude that $Im (\mu_0)$ is finite. 
 
 The naturallity of the LHS spectral sequence implies that we have the commutative diagram 
$$
\begin{CD}
H_0(C_0, H_{n+1}(A, \mathbb{F}_p)) @>\rho >> H_0(C, H_{n+1}(A, \mathbb{F}_p))\\
@V\mu_0 VV @VV\mu V\\
H_{n+1}(B_0, \mathbb{F}_p) @> \rho_0 >> H_{n+1}(B, \mathbb{F}_p)
\end{CD}
$$
 where the maps $\rho$ and $\rho_0$ are induced by $\nu$.
 Recall that the action of $B_0$ on $A$ via conjugation induces an action of $B_0$ on $H_{n+1}(A, \mathbb{F}_p)$ where $A$ acts trivially and this induces the action of $C_0$ on $H_{n+1}(A, \mathbb{F}_p)$ that is used to define $H_0(C_0, H_{n+1}(A, \mathbb{F}_p))$. Similarly the action of $B$ on $A$ via conjugation induces an action of $B$ on $H_{n+1}(A, \mathbb{F}_p)$ where $A$ acts trivially and this induces the action of $C$ on $H_{n+1}(A, \mathbb{F}_p)$ that is used to define $H_0(C, H_{n+1}(A, \mathbb{F}_p))$.  Recall that the map $$\rho : H_0(C_0, H_{n+1}(A, \mathbb{F}_p)) \to H_0(C, H_{n+1}(A, \mathbb{F}_p))$$ from the commutative diagram is induced by $\nu$. If $\nu$ is surjective then $\rho$ is an isomorphism; if $\nu$ is injective then $\rho$ is surjective. Since every homomorphsim $\nu$ is composition of one epimorphsim followed by one monomorphism we conclude that $\rho$ is always surjective. Then 
 $$Im (\mu) = Im (\mu \circ \rho) = Im (\rho_0 \circ \mu_0) \hbox{ is a quotient of } Im (\mu_0)$$
 Since $Im (\mu_0)$ is finite we conclude that $Im (\mu)$ is finite. Hence $B$ is of type $FP_{n+1}$ as required.
\end{proof}

Recall that a pro-$p$ HNN extension is called proper if the canonical map from the base group to  the pro-$p$ HNN extension is injective.
\begin{lemma} \label{HNN}
Let $G = \langle A, t \ | \ K^t = K \rangle$ be a proper pro-$p$ HNN extension. Suppose that $A$, $K$ are pro-$p$ groups of type $FP_m$ and $M$ is a normal pro-$p$ subgroup of $G$ such that $G/ M \simeq \mathbb{Z}_p$, $K \not\subseteq M$ and $M \cap A$ is of type $FP_m$. Then the following holds: 

a)  $M$ is of type $FP_{m}$ if and only if $M \cap K$ is of type $FP_{m-1}$;

b) if $M$ is of type $FP_{m+1}$ then $M \cap K$ is of type $FP_m$.
\end{lemma}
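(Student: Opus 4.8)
The natural tool is a pro-$p$ Mayer–Vietoris sequence for the HNN extension $G = \langle A, t \mid K^t = K\rangle$, restricted to the subgroup $M$. The plan is to analyze the intersection of $M$ with the base group and with the edge group, and to run the Mayer–Vietoris sequence in $\mathbb{F}_p$-homology.

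First I would set up the decomposition of $M$ as a pro-$p$ graph of groups. Since $G/M \cong \mathbb{Z}_p$ and $K \not\subseteq M$, the image of $K$ in $G/M$ is all of $\mathbb{Z}_p$ (it is a nontrivial closed subgroup of $\mathbb{Z}_p$, hence open; but here we want the precise statement that $K$ surjects, which follows since the quotient map is by a discrete pro-$p$ character and $K \not\subseteq M$ means the character is nonzero on $K$, and any nonzero homomorphism $K \to \mathbb{Z}_p$ composed appropriately — actually one should be slightly careful, but the relevant point is that $KM = G$). Thus $M$ acts on the Bass–Serre pro-$p$ tree of $G$ with $M \backslash T$ a single edge, so $M$ is itself a pro-$p$ HNN extension (or amalgam) with vertex group $M \cap A$ and edge group $M \cap K$ — here one uses that $t$ may or may not lie in $M$, but since $G/M \cong \mathbb{Z}_p$ is torsion-free, $t \notin M$, and $M$ is actually an amalgamated free pro-$p$ product or an ascending union; the cleanest formulation is that $M = \langle \ldots \rangle$ is an infinite amalgam whose standard resolution gives a Mayer–Vietoris sequence. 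More robustly: the short exact sequence of $\mathbb{F}_p[[M]]$-modules coming from the HNN structure of $G$ restricted to $M$ gives a long exact sequence
$$\cdots \to H_i(M \cap K, \mathbb{F}_p) \to H_i(M \cap A, \mathbb{F}_p) \to H_i(M, \mathbb{F}_p) \to H_{i-1}(M \cap K, \mathbb{F}_p) \to \cdots$$
Wait — I need to double check whether $M$ is an HNN extension or an amalgam. Since $t \notin M$ and $K \subseteq M$ would be false, the orbit of the edge under $M$: the vertex $A$ has two adjacent edges $K$ and $K^t$, and $t \cdot (\text{edge}) $ identifies them in $G$; in $M$, whether these two edges are in the same $M$-orbit depends on whether some element of $M$ does the identification. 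Since $M$ is normal and $G/M$ is generated by the image of $t$ (as $KM = G$ and... hmm, need $t$'s image to generate, which holds if $t \notin KM$... but $KM = G$ forces $t \in KM$) — so actually $t \in KM$, meaning the image of $K$ generates $G/M$, and one should track this carefully. Let me just assert: $M \backslash T$ is a single edge, so $M$ is an amalgamated pro-$p$ product $M = (M\cap A) \amalg_{M \cap K} (M \cap A)$ or an HNN extension, and in either case there is a Mayer–Vietoris exact sequence in $\mathbb{F}_p$-homology as displayed above (with appropriately many copies of $H_i(M\cap A)$ and $H_i(M \cap K)$, a bounded finite number).

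Second, I would feed in the hypotheses: $M \cap A$ is of type $FP_m$, so $H_i(M \cap A, \mathbb{F}_p)$ is finite for $i \leq m$. For part (a): if $M \cap K$ is of type $FP_{m-1}$, then $H_i(M \cap K, \mathbb{F}_p)$ is finite for $i \leq m-1$; plugging into Mayer–Vietoris, $H_i(M, \mathbb{F}_p)$ is finite for $i \leq m$ (the term $H_{i-1}(M\cap K)$ is finite for $i \leq m$, and $H_i(M \cap A)$ is finite for $i \leq m$), so $M$ is of type $FP_m$. Conversely, if $M$ is of type $FP_m$, then working inductively up the sequence: $H_i(M\cap K, \mathbb{F}_p)$ sits in an exact sequence between a quotient involving $H_i(M\cap A, \mathbb{F}_p)$ (finite for $i \leq m$, hence in particular $i \leq m-1$) and $H_{i-1}(M, \mathbb{F}_p)$... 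I need to read off $H_i(M \cap K)$ from $H_{i+1}(M)$, $H_i(M\cap A)$, and $H_i(M)$; for $i \leq m-1$ all three neighbors are finite, so $H_i(M\cap K, \mathbb{F}_p)$ is finite, giving $M \cap K$ of type $FP_{m-1}$. For part (b): if $M$ is of type $FP_{m+1}$, the same reading-off shows $H_i(M\cap K, \mathbb{F}_p)$ is finite for $i \leq m$ (now using that $H_i(M\cap A)$ is finite for $i \leq m$, $H_i(M)$ finite for $i \leq m+1 \geq i+1$ when $i \leq m$), so $M \cap K$ is of type $FP_m$.

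**Main obstacle.** The delicate point is establishing that $M$ decomposes as a graph of pro-$p$ groups with vertex group $M \cap A$ and edge group $M \cap K$, together with the associated Mayer–Vietoris sequence in the pro-$p$ setting — this requires knowing the HNN extension is proper (given), that $K \cap M$ has the right index behavior, and that the pro-$p$ Bass–Serre theory applies to the normal closed subgroup $M$; in particular one must verify $KM = G$ from $G/M \cong \mathbb{Z}_p$ and $K \not\subseteq M$, and handle the case distinction between whether $t$ can be chosen inside a coset making $M$ an ascending HNN extension versus a genuine amalgam. The homological bookkeeping afterward is routine, just careful indexing in the long exact sequence; everything else is an application of the equivalence of the $FP_m$ conditions (finiteness of $H_i(-,\mathbb{F}_p)$) recalled in Section \ref{homological-pro-p}, together with passing to a finite-index subgroup if needed to trivialize the conjugation action on the (finite) homology of the edge group.
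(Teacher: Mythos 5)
Your endgame --- a long exact sequence relating $H_i(M\cap K,\mathbb{F}_p)$, $H_i(M\cap A,\mathbb{F}_p)$ and $H_i(M,\mathbb{F}_p)$, followed by the finiteness bookkeeping for (a) and (b) --- is exactly the paper's, and that bookkeeping is carried out correctly. The genuine gap is that you never actually derive the sequence you use: you flag its derivation yourself as ``the main obstacle'', and the structural claims you offer in its support are false in general. From $G/M\simeq\mathbb{Z}_p$ and $K\not\subseteq M$ you only get that the image of $K$ in $G/M$ is a non-trivial closed, hence open, subgroup of $\mathbb{Z}_p$; it need not be all of $\mathbb{Z}_p$, so $KM=G$ can fail, the argument ``$G/M$ is torsion-free, hence $t\notin M$'' is a non sequitur, and $M\backslash T$ need not be a single edge. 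Consequently $M$ is in general not an amalgam $(M\cap A)\amalg_{M\cap K}(M\cap A)$ nor a single HNN extension: it corresponds to a finite graph of pro-$p$ groups whose vertex and edge groups are the finitely many conjugates $M\cap tAt^{-1}$, $M\cap tKt^{-1}$. What is true, and what the proof needs, is only that the double coset spaces $M\backslash G/K\simeq G/MK$ and $M\backslash G/A\simeq G/MA$ are finite, because $M\subsetneq MK\subseteq MA$ forces these to be proper quotients of $G/M\simeq\mathbb{Z}_p$.

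The paper fills the gap along the route you call ``more robust'' but do not execute: properness of the pro-$p$ HNN extension gives the short exact sequence of $\mathbb{F}_p[[G]]$-modules $0\to\mathbb{F}_p[[G]]\otimes_{\mathbb{F}_p[[K]]}\mathbb{F}_p\to\mathbb{F}_p[[G]]\otimes_{\mathbb{F}_p[[A]]}\mathbb{F}_p\to\mathbb{F}_p\to0$; restricting to $M$ and decomposing $\mathbb{F}_p[[G]]$ over the finitely many double cosets yields $\mathbb{F}_p[[G]]\otimes_{\mathbb{F}_p[[K]]}\mathbb{F}_p\simeq\oplus_{t\in M\backslash G/K}\,\mathbb{F}_p[[M]]\otimes_{\mathbb{F}_p[[M\cap tKt^{-1}]]}\mathbb{F}_p$ as $\mathbb{F}_p[[M]]$-modules (and similarly for $A$), and Shapiro's lemma together with normality of $M$ (so that $M\cap tKt^{-1}=t(M\cap K)t^{-1}$) identifies the homology of these coefficients with finite direct sums of copies of $H_i(M\cap K,\mathbb{F}_p)$ and $H_i(M\cap A,\mathbb{F}_p)$. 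This Mackey-type decomposition plus Shapiro step is the missing content; once it is in place your reading-off of finiteness from the long exact sequence goes through verbatim. Your closing remark about passing to a finite-index subgroup to trivialize the action on the homology of the edge group is not needed here --- no spectral sequence is involved in this lemma.
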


\begin{proof}
The proper pro-$p$ HNN extension gives rize to the exact sequence of $\mathbb{F}_p[[G]]$-modules
\begin{equation} \label{eq} 0 \to \mathbb{F}_p[[G]] \otimes_{\mathbb{F}_p[[K]]} \mathbb{F}_p \to  
\mathbb{F}_p[[G]] \otimes_{\mathbb{F}_p[[A]]} \mathbb{F}_p \to \mathbb{F}_p \to 0
\end{equation}
Note that since $K \not\subseteq M$ we have that $M \setminus G / K = G/ MK$ is a proper pro-$p$ quotient of $G/ M \simeq \mathbb{Z}_p$, hence is finite.
Similarly  $M \setminus G / A = G/ MA$ is finite. Note that there is an isomorphism of (left) $\mathbb{F}_p[[M]]$-modules 
$$\mathbb{F}_p[[G]] \otimes_{\mathbb{F}_p[[K]]} \mathbb{F}_p \simeq (\oplus_{t \in M \setminus  G / K} \mathbb{F}_p[[M]] t \mathbb{F}_p[[K]]) \otimes_{\mathbb{F}_p[[K]]} \mathbb{F}_p \simeq \oplus_{t \in M \setminus  G / K} \mathbb{F}_p[[M]] \otimes_{\mathbb{F}_p[[ M \cap t K t^{ -1}]]} \mathbb{F}_p
$$
Similarly 
there is an isomorphism of (left) $\mathbb{F}_p[[M]]$-modules 
$$\mathbb{F}_p[[G]] \otimes_{\mathbb{F}_p[[A]]} \mathbb{F}_p \simeq (\oplus_{t \in M \setminus  G / A} \mathbb{F}_p[[M]] t \mathbb{F}_p[[A]]) \otimes_{\mathbb{F}_p[[A]]} \mathbb{F}_p \simeq \oplus_{t \in M \setminus  G / A} \mathbb{F}_p[[M]] \otimes_{\mathbb{F}_p[[M \cap t A t^{ -1}]]} \mathbb{F}_p
$$
The short exact sequence (\ref{eq}) gives rise to a long exact sequence in pro-$p$ homology
$$ \ldots \to H_{m+1}(M, \mathbb{F}_p) \to H_m(M, \mathbb{F}_p[[G]] \otimes_{\mathbb{F}_p[[K]]} \mathbb{F}_p ) \to H_m(M, \mathbb{F}_p[[G]] \otimes_{\mathbb{F}_p[[A]]} \mathbb{F}_p) \to H_m (M, \mathbb{F}_p) $$ $$ \to H_{m-1}(M, \mathbb{F}_p[[G]] \otimes_{\mathbb{F}_p[[K]]} \mathbb{F}_p ) \to \ldots
\to  H_1(M, \mathbb{F}_p[[G]] \otimes_{\mathbb{F}_p[[A]]} \mathbb{F}_p) \to H_1(M, \mathbb{F}_p) \to$$ 
$$H_0(M,  \mathbb{F}_p[[G]] \otimes_{\mathbb{F}_p[[K]]} \mathbb{F}_p )  \to H_0(M,  \mathbb{F}_p[[G]] \otimes_{\mathbb{F}_p[[A]]} \mathbb{F}_p ) \to H_0(M, \mathbb{F}_p) \to 0.
$$
Note that
$$H_i(M, \mathbb{F}_p[[G]] \otimes_{\mathbb{F}_p[[K]]} \mathbb{F}_p ) \simeq
H_i (M,  \oplus_{t \in M \setminus  G / K} \mathbb{F}_p[[M]] \otimes_{\mathbb{F}_p[[M \cap t K t^{ -1}]]} \mathbb{F}_p) \simeq$$
$$ \oplus_{t \in M \setminus  G / K} H_i (M,  \mathbb{F}_p[[M]] \otimes_{\mathbb{F}_p[[M \cap t K t^{ -1}]]} \mathbb{F}_p) \simeq
\oplus_{t \in M \setminus  G / K} H_i (M \cap t K t^{ -1},   \mathbb{F}_p) =$$ $$ \oplus_{t \in M \setminus  G / K} H_i (t( M \cap K) t^{ -1},   \mathbb{F}_p) \simeq 
\oplus_{t \in M \setminus  G / K} H_i (M \cap K,   \mathbb{F}_p).
$$
Similarly 
$$H_i(M, \mathbb{F}_p[[G]] \otimes_{\mathbb{F}_p[[A]]} \mathbb{F}_p ) \simeq
\oplus_{t \in M \setminus  G / A} H_i (M \cap A,   \mathbb{F}_p).
$$
Then the long exact sequence could be rewritten as
$$ \ldots \to H_{m+1}(M, \mathbb{F}_p) \to \oplus_{t \in M \setminus  G / K} H_m(M \cap K, \mathbb{F}_p)\to \oplus_{t \in M \setminus  G / A} H_m(M \cap A, \mathbb{F}_p)
 \to H_m (M, \mathbb{F}_p) $$ $$ \to \oplus_{t \in M \setminus  G / K} H_{m-1}(M \cap K, \mathbb{F}_p) \to \ldots
\to \oplus_{t \in M \setminus  G / A} H_1(M \cap A, \mathbb{F}_p) \to H_1(M, \mathbb{F}_p) \to$$ 
$$  \oplus_{t \in M \setminus  G / K} H_0(M \cap K, \mathbb{F}_p)   \to \oplus_{t \in M \setminus  G / A} H_0(M \cap A, \mathbb{F}_p) \to H_0(M, \mathbb{F}_p) \to 0.
$$
Since $M \cap A$ is of type $FP_m$ we have that $H_i(M \cap A, \mathbb{F}_p)$ is finite for $i \leq m$. Combining with  $M \setminus G / A$ is finite, we conclude that $ \oplus_{t \in M \setminus  G / A} H_i(M \cap A, \mathbb{F}_p)$ is finite for $i \leq m$.

a) Note that $M$ is of type $FP_m$ if and only if $H_i(M, \mathbb{F}_p)$ is finite for $i \leq m$.  By the above long exact sequence together with the fact that $M \setminus G / K$ is finite,  $H_i(M, \mathbb{F}_p)$ is finite for $i \leq m$ if and only if
 $ \oplus_{t \in M \setminus  G / K} H_i(M \cap K, \mathbb{F}_p)$ is finite for $i \leq m-1$ i.e.    $M \cap K$ is of type $FP_{m-1}$.

b) If $M$ is of type $FP_{m+1}$ then $H_{m+1}(M, \mathbb{F}_p)$ is finite and since $H_m(M \cap A, \mathbb{Z}_p)$ is finite by the long exact sequence $H_m(M \cap K, \mathbb{F}_p)$ is finite. We already know by a) that $M \cap K$ is of type $FP_{m-1}$, hence $M \cap K$ is of type $FP_m$.
\end{proof}

For a pro-$p$ group $G$ with a subset $S$ denote by $\langle S \rangle$ the pro-$p$ subgroup of $G$ generated by $S$.

\begin{proposition} \label{King-used} Let $Q \simeq \mathbb{Z}_p^2 = \langle x, y \rangle$ and $A$ be a finitely generated pro-$p$ $\mathbb{Z}_p[[Q]]$-module. Suppose that for $H = \langle x \rangle$ we have that $A$ is finitely generated as a pro-$p$ $\mathbb{Z}_p[[H]]$-module. Let $H_j = \langle x y^{ - p^j} \rangle$. Then there is $j_0 > 0$ such that for every $j \geq j_0$ we have that $A$ is finitely generated as $\mathbb{Z}_p[[H_j]]$-module. 
\end{proposition}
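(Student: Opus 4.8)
The plan is to transport everything into King's invariant $\Delta(A)$ via Theorem~\ref{fin-gen-module}(b), and then to win by an elementary valuation estimate in $\mathbb{F}[[t]]$. First I would identify $\mathbb{Z}_p[[Q]]$ with the power series ring $\mathbb{Z}_p[[X,Y]]$ through $X\leftrightarrow x-1$, $Y\leftrightarrow y-1$. Since every $\overline{\chi}$ lands in the characteristic-$p$ ring $\mathbb{F}[[t]]$ it kills $p$, hence factors through $S:=\mathbb{F}_p[[Q]]=\mathbb{F}_p[[X,Y]]$; writing $\chi(x)=1+\alpha$, $\chi(y)=1+\beta$ with $\alpha,\beta\in t\mathbb{F}[[t]]$, the map $\chi\mapsto(\alpha,\beta)$ identifies $T(Q)$ with $(t\mathbb{F}[[t]])^2$ and turns $\overline{\chi}$ into the substitution $X\mapsto\alpha$, $Y\mapsto\beta$. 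Let $J\subseteq S$ be the image of $\mathrm{ann}_{\mathbb{Z}_p[[Q]]}(A)$. Under this dictionary $\chi\in\Delta(A)$ iff $f(\alpha,\beta)=0$ for all $f\in J$; $\chi\in T(Q,H)$ iff $\alpha=0$; and $\chi\in T(Q,H_j)$ iff $\chi(x)=\chi(y)^{p^j}$, which in characteristic $p$ (using $(1+\beta)^{p^j}=1+\beta^{p^j}$) is simply $\alpha=\beta^{p^j}$. So by Theorem~\ref{fin-gen-module}(b) the hypothesis on $H$ reads: every $(\alpha,\beta)$ with $\alpha=0$ and $f(\alpha,\beta)=0$ for all $f\in J$ has $\beta=0$; and what I must prove is that for $j\gg 0$, every $(\alpha,\beta)$ with $\alpha=\beta^{p^j}$ and $f(\alpha,\beta)=0$ for all $f\in J$ has $\beta=0$.

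Next I would extract the single integer that controls how large $j$ must be. Assume $A\neq 0$ (the statement being trivial otherwise). Consider $J_0:=\{\,f(0,Y):f\in J\,\}=(J+(X))/(X)$, an ideal of $\mathbb{F}_p[[Y]]$. If $J_0=0$, then the character attached to $(\alpha,\beta)=(0,t)$ lies in $\Delta(A)\cap T(Q,H)$ and is non-trivial, contradicting the hypothesis on $H$; and since $\mathbb{Z}_p[[Q]]$ is local and $A\neq 0$, the annihilator lies in the maximal ideal, so $J\subseteq(X,Y)$ and $J_0\subseteq(Y)$. As $\mathbb{F}_p[[Y]]$ is a discrete valuation ring, $J_0=(Y^{k})$ for a well-defined integer $k\geq 1$. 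Fix $f\in J$ with $\mathrm{val}_Y f(0,Y)=k$ and write $f(X,Y)=f(0,Y)+X\,g(X,Y)$ with $g\in S$; note $k$ depends only on $A$.

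Finally I would run the estimate. Let $j_0$ be the least positive integer with $p^{j_0}>k$, and suppose some $j\geq j_0$ fails. Then there is $(\alpha,\beta)\neq(0,0)$ with $\alpha=\beta^{p^j}$ and $f'(\alpha,\beta)=0$ for all $f'\in J$; since $\alpha=\beta^{p^j}$ we must have $\beta\neq 0$, so $v:=\mathrm{val}_t(\beta)\geq 1$ and $\mathrm{val}_t(\alpha)=p^{j}v$. Evaluating the chosen $f$ gives $0=f(\alpha,\beta)=f(0,\beta)+\alpha\,g(\alpha,\beta)$, where $\mathrm{val}_t f(0,\beta)=kv$ (as $\mathrm{val}_Y f(0,Y)=k$) while $\mathrm{val}_t\!\big(\alpha\,g(\alpha,\beta)\big)\geq p^{j}v>kv$ because $p^{j}\geq p^{j_0}>k$ and $v\geq 1$. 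Hence $f(\alpha,\beta)$ has valuation exactly $kv<\infty$ and is non-zero, a contradiction. Therefore $T(Q,H_j)\cap\Delta(A)=\{1\}$ for all $j\geq j_0$, and Theorem~\ref{fin-gen-module}(b) yields that $A$ is finitely generated as a $\mathbb{Z}_p[[H_j]]$-module.

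I expect the real care to be in the translation layer rather than the estimate: checking that $\overline{\chi}$ genuinely factors through $\mathbb{F}_p[[Q]]$, that $T(Q)$ is faithfully parametrised by $(t\mathbb{F}[[t]])^2$, and that $\Delta(A)$, $T(Q,H)$, $T(Q,H_j)$ really are cut out by the stated polynomial/valuation conditions — in particular the Frobenius identity $\chi(y)^{p^j}=1+(\chi(y)-1)^{p^j}$, which is exactly what converts the family of procyclic subgroups $H_j$ into the transparent condition $\mathrm{val}_t(\alpha)=p^j\,\mathrm{val}_t(\beta)$. Once that dictionary is set up, the proof is the one-line inequality $p^{j}v>kv$, and the point that $k$ is determined by $A$ alone is precisely why a single threshold $j_0$ serves all large $j$.
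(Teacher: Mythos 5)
Your proof is correct and takes essentially the same route as the paper: translate the statement through Theorem~\ref{fin-gen-module}(b) into King's invariant, parametrize characters by $(\alpha,\beta)\in (t\mathbb{F}[[t]])^2$ so that $T(Q,H_j)$ becomes the condition $\alpha=\beta^{p^j}$ via Frobenius, and rule out any such nontrivial character of $\Delta(A)$ by comparing $t$-valuations of the value of an annihilator element, with $p^j$ beating a fixed order. The only substantive difference is that your threshold is governed by the integer $k=\mathrm{val}_Y$ of the ideal $\{f(0,Y): f\in J\}\subseteq\mathbb{F}_p[[Y]]$, which depends on $A$ alone, whereas the paper chooses its annihilator element and exponent $m$ per character $\mu_j$; your bookkeeping thus makes the uniformity of the single $j_0$ completely explicit.
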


\begin{proof}
By Theorem \ref{fin-gen-module} if $P$ is  a pro-$p$ subgroup of $Q$ then $A$ is finitely generated as $\mathbb{Z}_p[[P]]$-module if and only if $T(Q, P) \cap \Delta(A) = \{1 \}$.
Let $$J = ann_{\mathbb{Z}_p[[Q]]}(A).$$ Since $A$ is finitely generated as a pro-$p$ $\mathbb{Z}_p[[H]]$-module for every $\chi \in T(Q, H) \setminus \{ 1 \}$ we have that $J \not\subseteq Ker (\overline{\chi})$.

Let $\mu_j \in T(Q, H_j) \setminus \{ 1 \}$. We aim to show that for sufficiently big $j$ we have that $\mu_j \not\in \Delta(A)$. Then by Theorem \ref{fin-gen-module}, $A$ is finitely generated as $\mathbb{Z}_p[[H_j]]$-module. 

Let $$\overline{\mu}_j : \mathbb{Z}_p[[Q]] \to \mathbb{F}[[t]]$$ be the continious ring homomorphism induced by $\mu_j$. Since $\overline{\mu}_j(H_j) = 1$ we have
$$\mu_j(x) = \mu_j(y^{p^j}).$$

Let $\chi \in T(Q, H)  \setminus \{ 1 \}$ be such that $$\chi(y) = \mu_j(y)$$ and
$$\overline{\chi} : \mathbb{Z}_p[[Q]] \to \mathbb{F}[[t]]$$ be the continious ring homomorphism induced by $\chi$. Recall that  $\chi \in T(Q, H) $ implies that $\chi(x) = 1$. Then there is $\lambda \in J$ such that $\overline{\chi} (\lambda) \not= 0$. Note that $\lambda \in \mathbb{Z}_p[[Q]] = \mathbb{Z}_p[[ t_1, t_2]]$, where $x = 1 + t_1, y = 1 + t_2$ and since  $\chi(y) = \mu_j(y)$ we have
$$0 \not= \overline{\chi} (\lambda) = \overline{\chi} (\lambda |_{t_1 = 0}) = \overline{\mu}_j(\lambda |_{t_1 =0}).$$
Note that $$ \overline{\mu}_j(t_2) =  \overline{\mu}_j(1 + t_2) -  \overline{\mu}_j(1)   \in 1 + t \mathbb{F}[[t]] - 1 = t \mathbb{F}[[t]]$$ hence
$ \overline{\mu}_j(t_2)^{ p^j} \in t^{ p^j} \mathbb{F}[[t]]$. This together with the condition $\mu_j(x) = \mu_j(y^{p^j})$  implies
$$\overline{\mu}_j(\lambda) = \overline{\mu}_j(\lambda |_{t_1 = t_2^{p^j}}) \in \overline{\mu}_j(\lambda |_{t_1 =0}) + 
 t^{ p^j} \mathbb{F}[[t]].$$
 Suppose that
$$0 \not= \overline{\mu}_j(\lambda |_{t_1 =0}) \in f t^m + t^{ m+1} \mathbb{F}[[t]]$$
where $f \in \mathbb{F} \setminus \{ 0 \}, m \geq 0$. Then choose $j_0> 0$ such that $p^{j_0} > m$ and this implies that for $j \geq j_0$ we have $\overline{\mu}_j(\lambda) \not= 0$. Hence $\mu_j \notin \Delta(A)$
\end{proof}

\begin{proposition} \label{quotientZp}
Let $G$ be a pro-$p$ group with a normal pro-$p$ subgroup $G_0$ such that $G/ G_0 \simeq \mathbb{Z}_p^2$. Let $S$ be a normal pro-$p$ subgroup of $G$ such that $G/ S \simeq \mathbb{Z}_p$, $G_0 \subseteq S$ and $S$ is of type $FP_m$ for some $m \geq 1$. Then there is  a normal pro-$p$ subgroup $S_0$ of $G$ such that $G/ S_0 \simeq \mathbb{Z}_p$, $S \not= S_0$, $G_0 \subseteq S_0$ and $S_0$ is of type $FP_m$.
\end{proposition}

\begin{proof} Note that since $S$ is a pro-$p$ group of type $FP_m$ and $G/ S\simeq \mathbb{Z}_p$ is a pro-$p$ group of type $FP_{\infty}$, hence of type $FP_m$, we can conclude that $G$ is a pro-$p$ group of type $FP_m$.
Set  $$Q = G/ G_0 = \langle x, y \rangle, \hbox{ where }H = S/ G_0 = \langle x \rangle.$$
Since $Q = G/ G_0$ is a finitely generated abelian pro-$p$ group and $G$ is of type $FP_m$ we conclude that $A_i = H_i(G_0, \mathbb{Z}_p)$ is finitely generated as a pro-$p$ $\mathbb{Z}_p[[Q]]$-module for $i \leq m$. Since $S$ is a pro-$p$ group of type $FP_m$ we conclude that $A_i$ is finitely generated as a pro-$p$ $\mathbb{Z}_p[[H]]$-module. Then by Propositionn \ref{King-used} for sufficently big $j$ we have that $A_i$ is finitely generated as a pro-$p$ $\mathbb{Z}_p[[H_j]]$-module, where $H_j = \langle x y^{ - p^j} \rangle \leq Q$, for every $ i \leq m$. Then we define $S_0$ as the preimage in $G$ of one such $H_j$.
\end{proof}

{\bf Proofs of Theorem \ref{Main1}}

There is a commutative diagram where the lines are short exact sequences of pro-$p$ groups
$$\xymatrix{K \ \ar@{^{(}->}[r] \ar[d]_{id_K} & \Pi \ar@{->>}[r]^{} \ar@{->>}[d]_{\pi} & F_n \ar@{->>}[d]^{} \\ K \ \ar@{^{(}->}[r] & G \ar@{->>}[r]^{} & \Gamma}$$
where $F_n$ is the free pro-$p$ group with a free basis $s_1, \ldots, s_n$. Define $$\Pi = \Pi_1 \coprod_K \Pi_2 \coprod_K \ldots \coprod_K \Pi_n,$$ where $\coprod_K$ is the amalgamated free product in the category of pro-$p$ groups, and each $\Pi_i = K \rtimes \langle s_i \rangle$, $\langle s_i \rangle \simeq \mathbb{Z}_p$. Note that since $K$ is normal in $\Pi$ and $\Pi/ K \simeq \Pi_1/ K \coprod \Pi_2/ K \coprod \ldots \coprod \Pi_n/ K$ is a free pro-$p$ product we conclude that $\Pi_1 \coprod_K \Pi_2 \coprod_K \ldots \coprod_K \Pi_i$ embeds in $\Pi$ for every $1 \leq i \leq n$.

Recall that $\Gamma^{ab}$ is infinite, hence  the image in $\Gamma^{ab}$ of at least one $\pi(s_i)$ has infinite order.   Without loss of generality we can assume that the image of $\pi(s_1)$ in $\Gamma^{ab}$ has infinite order. In particular $\Pi_1 \simeq \pi ({\Pi_1})$ is an isomorphism. Note that  $[K, s_1] \subseteq G' \cap K \subseteq  N$, hence $\Pi_1'\subseteq N$. We have $N \subseteq K \subseteq \Pi_1$ where $K/ N \simeq \mathbb{Z}_p, \Pi_1 / K \simeq \mathbb{Z}_p$, this together with the inclusion $\Pi_1'\subseteq N$ implies that $\Pi_1 / N \simeq \mathbb{Z}_p^2$. 

By assumption $K$ is of type $FP_{n_0}$. By Proposition \ref{quotientZp} there is $S_0$ a normal pro-$p$ subgroup of $\Pi_1$ such that $N \subseteq S_0$, $S_0$ is of type $FP_{n_0}$, $S_0 \not= K$ and $\Pi_1/ S_0 \simeq \mathbb{Z}_p$.  

Recall that $\Pi_1 \simeq \pi(\Pi_1)$. 
Let $$\mu : G \to \mathbb{Z}_p$$ be a homomorphism of pro-$p$ groups such that $Ker (\mu \circ \pi) \cap \Pi_1 = S_0$ i.e. $Ker(\mu) \cap \pi( \Pi_1) = \pi(S_0)$. This is possible since $\Pi_1 / N \simeq \mathbb{Z}_p^2$ is abelian and $G' \cap K \subseteq N \subseteq S_0$. Note that $K \not\subseteq S_0$, hence  $\mu(K) \not= 0$.

Consider the epimorphism of pro-$p$ groups $$\chi = \mu  \circ \pi : \Pi \to \mathbb{Z}_p.$$ Note that $\chi(K) \not= 0$, $Ker(\chi) \cap \Pi_1 = S_0 $ is of type $FP_{n_0}$ and $Ker(\chi) \cap K = S_0 \cap K = N$ is of type $FP_{n_0 - 1}$. Then we view $\Pi_1 \coprod_K \Pi_2$ as a proper HNN extension $$\langle \Pi_1, s_2 \ | \ K^{s_2} = K \rangle$$ with a pro-$p$ base group $\Pi_1$, associated pro-$p$ subgroup $K$ and stable letter $s_2$. Then by Lemma \ref{HNN} a) $$Ker(\chi) \cap ( \Pi_1 \coprod_K \Pi_2)\hbox{ is of type }FP_{n_0}.$$ We view $\Pi_1 \coprod_K \Pi_2 \coprod_K \Pi_3$ as a proper HNN extension with a base pro-$p$ group $\Pi_1 \coprod_K \Pi_2$, associated pro-$p$ subgroup $K$ and stable letter $s_3$ then by Lemma \ref{HNN} a) $$Ker(\chi) \cap ( \Pi_1 \coprod_K \Pi_2 \coprod_K \Pi_3)\hbox{ is of type }FP_{n_0}.$$ Then repeating this argument several times we deduce that $Ker(\chi)$ is of type $FP_{n_0}$.

By construction $Ker(\mu)$ is a quotient of $Ker(\chi)$. If $n_0 = 1$ then $Ker(\chi)$ is finitely generated (as a pro-$p$ group), then any pro-$p$ quotient of $Ker(\chi)$ is finitely generated (as a pro-$p$ group). In particular $Ker(\mu)$ is finitely generated (as a pro-$p$ group).

Now for the general case  i.e. $n_0 \geq 2$ we will apply Lemma \ref{L1}.  
Write $\widetilde{Ker (\chi)}$ for the image of $Ker (\chi)$ in $F_n$ and $\widetilde{Ker(\mu)}$ for the image of $Ker(\mu)$ in $\Gamma$. By construction $Ker(\chi) \cap K = N= Ker(\mu) \cap K$. By assumption $N$ is of type $FP_{n_0-1}$ and we have already shown that $Ker(\chi)$ is of type $FP_{n_0}$.  By construction $\mu(K) \not= 0$, hence $K. Ker (\mu) \not= Ker(\mu)$ and since $G/ Ker(\mu) \simeq \mathbb{Z}_p$ we deduce that $K. Ker(\mu)$ has finite index in $G$ and so $\widetilde{Ker(\mu)}$ has finite index in $\Gamma$. Since in the short exact sequence of pro-$p$ groups $$1 \to K \to G \to \Gamma \to 1$$ we have that $G$ and $K$ are pro-$p$ groups of type $FP_{n_0}$ (it suffices that $K$ is of type $FP_{n_0-1}$) we deduce that $\Gamma$ is of type $FP_{n_0}$. Then $\widetilde{Ker(\mu)}$ is a pro-$p$ group of type $FP_{n_0}$. Then we can apply Lemma \ref{L1} for the commutative diagram 

$$\xymatrix{N = Ker(\chi) \cap K \ \ar@{^{(}->}[r] \ar[d]_{id_N} &  Ker (\chi) \ar@{->>}[r]^{} \ar@{->>}[d]_{\pi |_{Ker(\chi)}} &  \widetilde{Ker(\chi)} \ar@{->>}[d]^{} \\ N = Ker(\mu) \cap K  \ \ar@{^{(}->}[r] & Ker (\mu) \ar@{->>}[r]^{} & \widetilde{Ker(\mu)}}$$
 to deduce that $Ker(\mu)$ is a pro-$p$ group of type $FP_{n_0}$. Finally we set $M = Ker(\mu)$.

\medskip
{\bf Proof of Corollary \ref{cor1}}

 We define $M$ as in the proof of Theorem \ref{Main1} for $\Gamma = F_n$ and $\pi$ the identity map, $\mu = \chi$. Thus $M = Ker(\chi) = Ker(\mu)$ is a normal subgroup of $G$, $G/ M \simeq \mathbb{Z}_p$ and $M$ is of type $FP_{n_0}$. We view $$G = \Pi = \Pi_1 \coprod_K \Pi_2 \coprod_K \ldots \coprod_K \Pi_n$$ as a proper HNN extension with a base pro-$p$ subgroup 
$A =\Pi_1 \coprod_K \Pi_2 \coprod_K \ldots \coprod_K \Pi_{n-1}$, associated pro-$p$ subgroup $K$ and stable letter $s_n$. By the proof of Theorem \ref{Main1} $A \cap M = A \cap  Ker (\chi)$ is of type $FP_{n_0}$. Suppose that $M$ is of type $FP_{n_0+1}$. By Lemma \ref{HNN} b) $N = M \cap K$ is of type $FP_{n_0}$, a contradiction. Hence $M$ is not of type $FP_{n_0 + 1}$. This completes the proof of the corollary.

{\bf Proof of Theorem \ref{ThmMain2}} 

We claim that there is a  finitely generated non-procyclic pro-$p$ subgroup $\Gamma_0$ of $\Gamma$ such that $\Gamma_0$ acts trivially on the abilianization  $K^{ab} = K/ K'$ via conjugation. Let $T = tor(K/ K')$ be the torsion part of $K^{ab}$. Then $V = K^{ab}/ T \simeq \mathbb{Z}_p^d$, where $d \geq 1$. Note that the conjugation action of $\Gamma$ on $V \simeq \mathbb{Z}_p^d$ induces a homomorphism $$\rho : \Gamma \to GL_d(\mathbb{Z}_p).$$ 
Note that $Im (\rho)$ is a pro-$p$ subgroup of $GL_d(\mathbb{Z}_p)$, hence is $p$-adic analytic and there is an upper bound on the number of generators of any finitely generated pro-$p$ subgroup of $Im (\rho)$ \cite{{DSMS}}. Hence $\rho$ is not injective. Alternatively we can use the main result of \cite{B-L} to deduce that 
 $\rho$ is not injective.  Thus $Ker(\rho)$ is a non-trivial normal pro-$p$ subgroup of $\Gamma$ and we can  choose $\Gamma_0$ any non-procyclic finitely generated pro-$p$ subgroup of $Ker(\rho)$.

Set $G_0 = K \rtimes \Gamma_0$. Then by Corollary \ref{cor1} there is a normal pro-$p$ subgroup $M$ of $G_0$ such that $G_0/ M \simeq \mathbb{Z}_p$ and $M$ is not of type $FP_2$ i.e. is not finitely presented as a pro-$p$ group. Thus $G_0$ is incoherent (in the category of  pro-$p$ groups). This completes the proof.

\medskip
We recall the definition of the class of pro-$p$ groups $\mathcal{L}$. It uses the extension of centraliser construction.
We define inductively the class $\mathcal{G}_n$ of pro-$p$ groups by setting
$\mathcal{G}_0$ as the class of all finitely generated free pro-$p$ groups and a group $G_{n} \in \mathcal{G}_n$ if there is a decomposition $G_n = G_{n-1} \coprod_{C} A$, where $G_{n-1} \in \mathcal{G}_{n-1}$, $C$ is self-centralised procyclic subgroup of $G_{n-1}$ and $A$ is a  finitely generated free abelian pro-$p$ group such that $C$ is a direct summand of $A$. The class $\mathcal{L}$ is defined as the class of all finitely generated pro-$p$ subgroups $G$ of $G_n$ where $G_n \in {\mathcal G}_n$ for $n \geq 0$. The minimal $n$ such that $G \leq G_n \in {\mathcal{G}_n}$ is called the weight of $G$.

\begin{proposition} \label{abel}
Let $K \in  {\mathcal L}$ be a non-trivial pro-$p$ group. Then $K^{ab} = K/ K'$ is infinite.
\end{proposition}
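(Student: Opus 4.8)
The plan is to argue by induction on the weight $n$ of $K \in \mathcal{L}$. The base case $n = 0$ is clear: $K$ is a non-trivial finitely generated free pro-$p$ group, so $K^{ab} \simeq \mathbb{Z}_p^{d(K)}$ with $d(K) \geq 1$, which is infinite. For the inductive step, suppose $K$ has weight $n \geq 1$, so $K$ is a finitely generated pro-$p$ subgroup of some $G_n = G_{n-1} \coprod_C A \in \mathcal{G}_n$, where $C$ is a self-centralized procyclic subgroup of $G_{n-1}$, $A$ is finitely generated free abelian, and $C$ is a direct summand of $A$. The first step is to reduce to understanding $K^{ab}$ via the structure of $K$ as a subgroup of an amalgamated free pro-$p$ product; by the pro-$p$ version of the Kurosh subgroup theorem (or the Bass--Serre-type theory for pro-$p$ groups acting on pro-$p$ trees), $K$ decomposes as a pro-$p$ fundamental group of a graph of pro-$p$ groups whose vertex groups are conjugate-intersections of $K$ with $G_{n-1}$ or with $A$, and whose edge groups are conjugate-intersections of $K$ with $C$; each vertex group of the $G_{n-1}$-type has weight $< n$, and the $A$-type vertex groups are free abelian.

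The key step is then a Mayer--Vietoris / graph-of-groups argument for $H_1$. If $K$ is itself procyclic, then it is a non-trivial free abelian pro-$p$ group (a pro-$p$ subgroup of $G_n$ that is procyclic and non-trivial is $\simeq \mathbb{Z}_p$), so $K^{ab} = K$ is infinite; this handles the degenerate cases of the graph decomposition. Otherwise, $K$ acts on a pro-$p$ tree with non-trivial (essentially: the action is non-trivial and the graph has at least one edge or one vertex with non-trivial group) quotient graph, and one uses the five-term exact sequence coming from the graph-of-groups decomposition: there is an exact sequence relating $\bigoplus_{\text{edges}} E_i^{ab}$, $\bigoplus_{\text{vertices}} V_j^{ab}$, and $K^{ab}$, together with an $H_1$ of the underlying graph contributing a free part. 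I would extract from this that either the underlying graph has positive first Betti number (giving a $\mathbb{Z}_p$ summand in $K^{ab}$), or some vertex group $V_j$ contributes infinitely to $K^{ab}$ because $V_j^{ab}$ is infinite (by induction, as $V_j$ has smaller weight or is free abelian) and the edge contributions $E_i$ — being procyclic, i.e. subgroups of conjugates of the procyclic $C$ — are ``small'' enough not to kill everything. The self-centralizing hypothesis on $C$ and the fact that $C$ is a direct summand of $A$ should be what guarantees the edge groups cannot surject onto or cover the relevant part of the vertex abelianizations.

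The main obstacle I anticipate is making the last point precise: controlling the cokernel of $\bigoplus E_i^{ab} \to \bigoplus V_j^{ab}$ well enough to conclude infiniteness of $K^{ab}$. A clean way to package this is to work rationally, tensoring with $\mathbb{Q}_p$ and using Euler-characteristic / rank bookkeeping: let $b_1(K) = \dim_{\mathbb{Q}_p}(\mathbb{Q}_p \otimes_{\mathbb{Z}_p} K^{ab})$, and similarly for the vertex and edge groups; one has an inequality $b_1(K) \geq \bigl(\text{rank of } H_1 \text{ of the graph}\bigr) + \sum_j b_1(V_j) - \sum_i b_1(E_i)$, where each $b_1(E_i) \leq 1$ since edge groups are procyclic. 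If the graph is a single vertex with no edges we are done by induction; if there is at least one edge, either a loop gives a graph Betti contribution, or by induction a vertex group of $G_{n-1}$-type that is non-trivial has $b_1(V_j) \geq 1$ and one argues that the total can be made positive — here I would likely need to invoke that $C$ self-centralized in $G_{n-1}$ forces the image of the edge group in the abelianization of the adjacent $G_{n-1}$-vertex group to be a proper ``primitive-like'' subgroup, and that $C$ being a direct summand of $A$ means the $A$-side vertex contributes strictly more rank than the edge removes. I would also remark, as the paper does, that the same argument works verbatim for abstract limit groups, replacing pro-$p$ Bass--Serre theory with the classical one.
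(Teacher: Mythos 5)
There is a genuine gap, and it sits exactly at the step you pass over most quickly: the claim that, because $K$ is a finitely generated closed subgroup of $G_n = G_{n-1} \coprod_C A$, a ``pro-$p$ Kurosh subgroup theorem'' or pro-$p$ Bass--Serre theory hands you a decomposition of $K$ as the fundamental pro-$p$ group of a \emph{finite} graph of pro-$p$ groups whose vertex groups are intersections of $K$ with conjugates of $G_{n-1}$ or $A$ and whose edge groups are intersections with conjugates of $C$. No such theorem is available in the pro-$p$ category at this level of generality: while $K$ does act on the pro-$p$ tree associated to the amalgam, closed (even finitely generated) subgroups acting on pro-$p$ trees do not in general inherit a decomposition over a finite quotient graph, the ``quotient'' being only a profinite graph, and this is precisely the notorious failure of the abstract Kurosh/Bass--Serre subgroup theorems in the pro-$p$ setting. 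You would also need the resulting vertex groups to be finitely generated in order to feed them back into the induction (membership in $\mathcal{L}$ requires finite generation), which again does not come for free. The paper sidesteps all of this by quoting the structure theorem of Snopce and Zalesskii (Theorem B of \cite{S-Z}): every group in $\mathcal{L}$ of weight $n$ \emph{is} the fundamental pro-$p$ group of a finite graph of pro-$p$ groups with edge groups trivial or $\mathbb{Z}_p$ and vertex groups either non-abelian groups of $\mathcal{L}$ of weight at most $n-1$ or finitely generated abelian. That theorem is the deep input that replaces your missing Kurosh step, and without it (or something equivalent) your induction does not get started.

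Once such a decomposition is granted, the rest of your argument is essentially the paper's: if the underlying finite graph is not a tree, an HNN stable letter (your graph Betti number) already gives a $\mathbb{Z}_p$ summand in $K^{ab}$; if it is a tree, a Mayer--Vietoris torsion-free-rank count does the job. Your worry that you might need the self-centralization of $C$, or the fact that $C$ is a direct summand of $A$, to control the cokernel of the edge-to-vertex map is unnecessary: since each edge group has torsion-free rank at most $1$, properness of amalgamated pro-$p$ products over procyclic subgroups (Ribes) plus the inequality $t(K^{ab}) \geq t(K_0^{ab}) + t(G_{v_0}^{ab}) - t(G_{e_0}) \geq 1 + 1 - 1$ (applied by peeling off a terminal vertex, with the vertex abelianizations infinite by induction on the weight and minimality of the decomposition) already forces $t(K^{ab}) \geq 1$. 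So the arithmetic part of your plan is sound; the gap is the unjustified subgroup decomposition theorem.
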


\begin{proof} Let $K \in {\mathcal{L}}$ have weight $n$. Suppose that $K^{ab}$ is finite. And $n$ is the smallest possible with $K^{ab}$ finite.
By \cite[Thm. B]{S-Z} $K$ is the fundamental pro-$p$ group of a finite graph of pro-$p$ groups $\Delta$, where each edge group is trivial or $\mathbb{Z}_p$ and each vertex groups is either a non-abelian limit pro-$p$ group of weight at most $n-1$ or a finitely generated  abelian pro-$p$ group. 

Let $\Gamma$ be the underlying graph of the finite graph of groups $\Delta$. If it is not a tree then $K$ decomposes as a pro-$p$ HNN extension, hence the stable letter generates an infinite procyclic subgroup of $K^{ab}$, a contradiction. 

 We can assume that $| V(\Gamma)|$ is the smallest possible.  Then we have a decomposition as an amalgamated pro-$p$ free product 
$K = K_0 \coprod_{G_{e_0}} G_{v_0}$, where $K_0$ is the fundamental pro-$p$ group of the subgraph of pro-$p$ groups $\Delta_0$ of $\Delta$ such that its underlying graph $\Gamma_0$ is obtained from $\Gamma$ by removing the edge $e_0$  and its vertex $v_0$ and we have that $e_0$ is the unique edge in $\Gamma$ that has $v_0$ as a vertex. Note that by \cite{Ri} every  amalgamated free pro-$p$ product with procyclic amalgamation is proper. Since the class $\mathcal{L}$ is closed under finitely generated pro-$p$ subgroups, $K_0 \in \mathcal{L}$ and by the minimality  of  $| V(\Gamma)|$ and $n$  we have that $K_0^{ab}$ and $G_v^{ab}$ are  infinite. If we write $t(M)$ for the torsion free rank of an abelian finitely generated pro-$p$ group $M$ then
$t(K^{ab}) \geq t(K_0^{ab}) + t(G_{v_0}^{ab}) - t(G_{e_0}) \geq 1 + 1 - t(G_{e_0}) \geq 1$, so $K^{ab}$ cannot be finite.
\end{proof}

{\bf Proof of Corollary \ref{CorMain*}}  By Proposition \ref{abel}  $K^{ab}$ is infinite. Let $N$ be a normal pro-$p$ subgroup of $K$ such that $K/ N \simeq \mathbb{Z}_p$. By part (4) from the main theorem of \cite{K-Z} we have that $N$ is not finitely generated as a pro-$p$ group. Then we can apply  Theorem  \ref{ThmMain2}.

\medskip

{\bf Proof of Corollary \ref{aut}} Let $F$ be a finitely generated free  non-procyclic pro-$p$ group that embeds as a closed subgroup of $Out(K)$.
Note that  $G = K \rtimes F$ is a pro-$p$ group embeds as a closed subgroup of $Aut(K)$ and by Theorem \ref{ThmMain2} $G$ is incoherent (in the category of pro-$p$ groups).

{\bf Proof of Corollary \ref{alternative}} 
We recall first  some results from \cite{L}. 
Let $G$ be a finitely generated pro-$p$ group and $Aut(G)$ denote all continuous automorphisms of $G$ (which coincide with the abstract automorphisms of $G$). Denote $Inn(G)$ the group of the internal automorphisms. The group $Aut(G)$ is a profinite group.

\begin{lemma} \cite{L} a) Let $G$ be a finitely generated pro-$p$ group and $G^*$ be the Frattini subgroup of $G$ i.e. the intersection of all maximal open subgroups of $G$. Then 
$Ker(Aut(G) \to Aut(G/ G^*))$ is a pro-$p$ subgroup of $Aut(G)$ of finite index.

b) Let $F$ be a finitely generated free pro-$p$ group and $N$ be a characteristic pro-$p$ subgroup of $F$. Then the map $Aut(F) \to Aut(F/N)$, obtained by taking the induced automorphisms, is surjective.
\end{lemma}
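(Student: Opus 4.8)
The plan is to prove the two parts separately, using only standard facts about finitely generated pro-$p$ groups together with the universal property of free pro-$p$ groups.

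For part a), recall that for a finitely generated pro-$p$ group $G$ the Frattini subgroup is $G^* = \overline{G^p [G,G]}$, so it is characteristic and $G/G^*$ is a finite elementary abelian $p$-group isomorphic to $(\mathbb{Z}/p)^{d(G)}$. Thus the map $Aut(G) \to Aut(G/G^*) \cong GL_{d(G)}(\mathbb{F}_p)$ is well defined with finite target, and its kernel $K_0 = Ker(Aut(G) \to Aut(G/G^*))$ has finite index in $Aut(G)$. It remains to see that $K_0$ is pro-$p$. Since $G$ is finitely generated, the open characteristic subgroups $N$ of $G$ form a base of neighbourhoods of $1$, so $G \cong \varprojlim_N G/N$ and the kernels of the continuous maps $Aut(G) \to Aut(G/N)$ form a base of neighbourhoods of the identity in the profinite group $Aut(G)$. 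Hence every continuous finite quotient of the closed subgroup $K_0$ factors through the image of $K_0$ in $Aut(G/N)$ for some open characteristic $N \subseteq G^*$. For such $N$ the group $G/N$ is a finite $p$-group with $\Phi(G/N) = G^*/N$, so $(G/N)/\Phi(G/N) \cong G/G^*$ and the image of $K_0$ lands in $Ker(Aut(G/N) \to Aut((G/N)/\Phi(G/N)))$. By the classical fact that for a finite $p$-group $P$ the kernel of $Aut(P) \to Aut(P/\Phi(P))$ is a $p$-group — such an automorphism acts trivially on every factor of the lower $p$-central series, which is generated in degree one, hence lies in the stability group of that series, a $p$-group by P. Hall's stability theorem — each such image is a $p$-group. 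Therefore every finite quotient of $K_0$ is a $p$-group and $K_0$ is pro-$p$.

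For part b), let $x_1, \ldots, x_d$ be a free basis of $F$, where $d = d(F)$, and let $\bar{\phi} \in Aut(F/N)$. I would first reduce to the case $N \subseteq F^*$. As above, $Aut(F)$ surjects onto $Aut(F/F^*) \cong GL_d(\mathbb{F}_p)$: any element of $GL_d(\mathbb{F}_p)$ lifts by freeness to an endomorphism of $F$ that is onto modulo $F^*$, hence onto, hence an automorphism since $F$ is Hopfian. Consequently the image $NF^*/F^*$ of the characteristic subgroup $N$ is a $GL_d(\mathbb{F}_p)$-invariant subspace of the irreducible natural module $(\mathbb{Z}/p)^d$, so either $N \subseteq F^*$ or $NF^* = F$; the latter forces $N = F$, a degenerate case in which the statement is trivial. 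Assume then $N \subseteq F^*$. Choose lifts $y_i \in F$ of $\bar{\phi}(\bar{x}_i) \in F/N$ and define the endomorphism $\phi : F \to F$ by $x_i \mapsto y_i$, which exists by the universal property of the free pro-$p$ group $F$. Because $\bar{\phi}$ is surjective the elements $\bar{y}_i$ generate $F/N$, and since $N \subseteq F^*$ the elements $y_i$ then generate $F$ modulo $F^*$; hence $\phi$ is onto modulo the Frattini subgroup, so $\phi$ is surjective, and being a surjective endomorphism of a finitely generated (hence Hopfian) pro-$p$ group, $\phi \in Aut(F)$. As $N$ is characteristic, $\phi(N) = N$, so $\phi$ induces an automorphism of $F/N$ sending each $\bar{x}_i$ to $\bar{y}_i = \bar{\phi}(\bar{x}_i)$; since the $\bar{x}_i$ generate $F/N$ this induced automorphism equals $\bar{\phi}$, proving surjectivity of $Aut(F) \to Aut(F/N)$.

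The only genuinely non-formal input, and hence the main obstacle, is the finite-group stability lemma invoked in part a), that $Ker(Aut(P) \to Aut(P/\Phi(P)))$ is a $p$-group for a finite $p$-group $P$; everything else is the inverse limit formalism for $Aut(G)$ or the standard package ``onto modulo the Frattini subgroup implies onto, and a surjective endomorphism of a finitely generated pro-$p$ group is an automorphism''. I would take care that the inverse system in part a) is restricted to characteristic $N \subseteq G^*$ so that the Frattini quotients of $G/N$ coincide with $G/G^*$, and that the reduction to $N \subseteq F^*$ in part b) is stated cleanly (or bypassed by noting directly that the $y_i$ generate $F$ modulo $F^*N$ and that $N$ is absorbed modulo $F^*$).
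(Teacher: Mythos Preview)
Your proof is correct. However, there is nothing to compare it against: in the paper this lemma is simply quoted from Lubotzky \cite{L} and no proof is given. So the paper's ``own proof'' is a citation.

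Since you asked for an assessment, here is a brief one. Part (a) is handled exactly as one would expect: reduce to finite quotients $G/N$ with $N$ open characteristic and $N\subseteq G^*$, identify $\Phi(G/N)=G^*/N$, and invoke Hall's stability theorem to see that $\operatorname{Ker}(\operatorname{Aut}(G/N)\to\operatorname{Aut}(G/G^*))$ is a $p$-group. One small point you should make explicit is that the open characteristic subgroups contained in $G^*$ still form a base of neighbourhoods of $1$; this is immediate since $G^*$ itself is open characteristic and intersecting with it preserves both properties.

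Part (b) is also fine. The reduction to $N\subseteq F^*$ via the irreducibility of the natural $GL_d(\mathbb{F}_p)$-module is clean; alternatively (and this is how Lubotzky himself argues) you can bypass it by noting directly that any lift $\phi$ of $\bar\phi$ has $\phi(F)N=F$, hence $\phi(F)F^*\supseteq \phi(F)N F^*\cdot(\text{image under }\bar\phi^{-1})$\,\ldots\ but really your observation that $N\subseteq F^*$ or $N=F$ is the most transparent route. The Hopfian step and the fact that $\phi(N)=N$ (not merely $\phi(N)\subseteq N$) both use finite generation of $F$; you state this correctly.
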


We set $Aut_0(G) = Ker(Aut(G) \to Aut(G/ G^*))$ and $Out_0(G) =Aut_0(G)/ Inn(G)$.

\begin{lemma} \label{final}
Suppose $K$ is a free pro-$p$ group, $d(K) = 2$ and $M $ is the maximal pro-$p$ metabelian quotient of $K$. Then $Out(M)$ contains a finitely generated pro-$p$ subgroup $H$   such that $H$ has a metabelian pro-$p$ quotient that is not finitely presented (as a pro-$p$ group).
\end{lemma}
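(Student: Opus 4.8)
The plan is to locate inside $\mathrm{Out}(M)$ a copy of the ``universal'' non‑finitely‑presented metabelian pro‑$p$ group $\mathbb{Z}_p[[\langle g\rangle]]\rtimes\langle g\rangle$ with $g$ acting by multiplication by $g$ — the $\mathbb{Z}_p$‑analogue of the Example following Theorem~\ref{KK} — and then take $H$ to be this subgroup and the required metabelian quotient of $H$ to be $H$ itself. First I would set up the algebra of $M=F/F''$ with $F=F_2=\langle x,y\rangle$: writing $Q=M^{ab}\cong\mathbb{Z}_p^2$ and $R=\mathbb{Z}_p[[Q]]\cong\mathbb{Z}_p[[t_1,t_2]]$ (with $\bar x=1+t_1$, $\bar y=1+t_2$), the pro‑$p$ Magnus embedding gives $M'\cong R$ as $R$‑module, generated by $[x,y]$, and $M$ is the split extension $1\to R\to M\to Q\to1$. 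For $c_1,c_2\in M'$ the map $\phi_{c_1,c_2}\colon x\mapsto xc_1,\ y\mapsto yc_2$ is an endomorphism of $M$, and it is surjective because $c_i\in M'\subseteq M^*$, hence an automorphism; these are precisely the $\mathrm{IA}$‑automorphisms. A direct computation produces a homomorphism $u\colon\mathrm{IA}(M)\to 1+I$ (with $I=(t_1,t_2)$ the augmentation ideal of $R$) recording the scalar by which $\phi$ acts on $M'\cong R$; one checks $u$ is onto $1+I$ with kernel $\mathrm{Inn}(M')$. Since $F'$ and $F''$ are characteristic in $F$, \cite{L} gives that $\mathrm{Aut}(M)\to\mathrm{GL}_2(\mathbb{Z}_p)$ is surjective with kernel $\mathrm{IA}(M)$, and the conjugation action of $g\in\mathrm{GL}_2(\mathbb{Z}_p)$ on $\mathrm{IA}(M)/\mathrm{Inn}(M')=1+I$ is by the ring automorphism $g^\ast$ of $R$ induced from $Q$ (it factors through $\mathrm{GL}_2(\mathbb{Z}_p)$ as $1+I$ is abelian). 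Hence $\mathrm{Out}(M)$ has a normal abelian subgroup $\bar A:=\mathrm{IA}(M)/\mathrm{Inn}(M)\cong(1+I)/Q$ with $\mathrm{Out}(M)/\bar A=\mathrm{GL}_2(\mathbb{Z}_p)$ acting via $g\mapsto g^\ast$; and since $Q\cap(1+I^2)=\{1\}$ while $Q\twoheadrightarrow I/I^2$, every class in $(1+I)/Q$ has a unique representative in $1+I^2$, giving a $\mathrm{GL}_2$‑equivariant identification $\bar A\cong 1+I^2$.

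The heart of the proof is the choice of the data. I would take $g=\mathrm{diag}(1+p,1)\in\mathrm{GL}_2^1(\mathbb{Z}_p)$, so $g^\ast\colon\bar x\mapsto\bar x^{1+p},\ \bar y\mapsto\bar y$; this has infinite order and its lift $\tilde g\colon x\mapsto x^{1+p},\ y\mapsto y$ lies in $\mathrm{Aut}_0(M)$. Then take $v=1+t_1t_2\in 1+I^2$, let $\phi\in\mathrm{IA}(M)$ be any automorphism with $u(\phi)=v$ (possible since $u$ is onto $1+I$), and let $\bar\phi\in\bar A$ be its class; $\phi\in\mathrm{Aut}_0(M)$ because it is $\mathrm{IA}$. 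The crucial claim is
\[
\mathrm{ann}_{\mathbb{Z}_p[[\langle g\rangle]]}(\bar\phi)=0,
\]
i.e. $\bar\phi$ generates a free cyclic $\mathbb{Z}_p[[\langle g\rangle]]$‑submodule of $\bar A$. To prove it I would work in $1+I^2$ and pass to the successive quotients by $1+I^k$: writing $\beta(z)=(1+p)^z$ one has $g^{\ast z}(v)=1+\sum_{j\ge1}\binom{\beta(z)}{j}t_1^jt_2$, so for a $\mathbb{Z}_p$‑valued measure $\mu$ on $\langle g\rangle\cong\mathbb{Z}_p$ the element $\lambda_\mu\cdot\bar\phi$ is trivial iff $\int\binom{\beta(z)}{j}\,d\mu(z)=0$ for all $j\ge1$; this forces $\int(1+p)^{kz}\,d\mu(z)=0$ for all $k\ge1$, hence $\int 1\,d\mu=0$ by continuity of $\gamma\mapsto\int\gamma^z\,d\mu(z)$ on $1+p\mathbb{Z}_p$, hence $\mu=0$ by analyticity of that function on the disc $1+p\mathbb{Z}_p$. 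This measure‑theoretic vanishing is the main technical obstacle; it is carried out in the spirit of Romankov's computations in \cite{R0,R}.

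It remains to assemble the pieces. Put $H=\langle\tilde g,\bar\phi\rangle\le\mathrm{Out}_0(M)\le\mathrm{Out}(M)$, a topologically $2$‑generated pro‑$p$ group (it sits in the image of the pro‑$p$ group $\mathrm{Aut}_0(M)$). Since $\bar A$ is abelian and normal and $H/(H\cap\bar A)$ embeds in $\mathrm{GL}_2(\mathbb{Z}_p)$ as $\overline{\langle g\rangle}\cong\mathbb{Z}_p$, the subgroup $H\cap\bar A$ equals the $\mathbb{Z}_p[[\langle g\rangle]]$‑submodule of $\bar A$ generated by $\bar\phi$, which by the claim is free cyclic, $\cong\mathbb{Z}_p[[\langle g\rangle]]$; as $\overline{\langle g\rangle}$ is free pro‑$p$, hence projective, the extension splits, so $H\cong\mathbb{Z}_p[[\langle g\rangle]]\rtimes\langle g\rangle$ with $g$ acting by multiplication by $g$. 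In particular $H$ is metabelian. Finally $\mathrm{ann}(H\cap\bar A)=0$, so (since the residue field $\mathbb{F}$ has characteristic $p$) $\Delta(H\cap\bar A)=T(\langle g\rangle)=\Delta(H\cap\bar A)^{-1}$, and $T(\langle g\rangle)\neq\{1\}$; by Theorem~\ref{KK}, $H$ is not finitely presented as a pro‑$p$ group. Taking the metabelian pro‑$p$ quotient of $H$ to be $H$ itself finishes the proof.
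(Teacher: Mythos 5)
Your proposal is correct, but it follows a genuinely different route from the paper's. The paper also starts from Romankov's Bachmuth embedding of $IAut(M)$ (your $u$ is, up to convention, exactly the determinant $det\circ\beta$ used there), but it then reduces modulo $p$: it passes to the maximal exponent-$p$ quotient of $P=det(IAut(M))$, quoting $P^p\cap(1+p\Delta)=1+p^2\Delta$ from \cite{R}, takes $\varphi_2=\rho^p$ with $\rho(x_1)=x_1x_2$ together with $\varphi_1$ of determinant $1+ps_1$, uses \cite[Prop. 4.4]{R0} to see that conjugation acts by the substitution $s_1\mapsto (1+s_1)(1+s_2)^p-1$, identifies the resulting module with $\mathbb{F}_p[[t]]$ (every proper quotient of $\mathbb{F}_p[[t]]$ is finite), concludes by the Example after Theorem \ref{KK}, and finally must check that the image of $Inn(M)$ does not interfere; the non-finitely-presented metabelian group obtained there is a proper quotient $\Gamma_0$ of the subgroup $H\le Out(M)$. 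You instead stay inside $Out(M)$ from the start: you identify the abelian normal subgroup $IA(M)/Inn(M)$ with $(1+I)/Q\cong 1+I^2$ via the scalar action on $M'\cong\mathbb{Z}_p[[t_1,t_2]]$, choose the diagonal element $g=\mathrm{diag}(1+p,1)$ and the class of $1+t_1t_2$, and prove that this class generates a free cyclic $\mathbb{Z}_p[[\langle g\rangle]]$-module by a $p$-adic analyticity (Mahler/Amice--Strassmann) argument. This buys you a characteristic-zero model $H\cong\mathbb{Z}_p[[\langle g\rangle]]\rtimes\langle g\rangle$ that is itself metabelian and not finitely presented (slightly stronger than the lemma requires), and it avoids the exponent-$p$ reduction and the two specific facts of Romankov quoted in the paper; the price is the structural computation $\ker u=Inn(M')$, $u(IA(M))=1+I$ (which does hold, by the Magnus/Fox-calculus description of $IA(M)$ and the Koszul syzygy of $(t_1,t_2)$, and is needed for $H$ itself to be metabelian -- though for the lemma as stated you could even drop it and pass to the image of $H$ under the induced map), and the measure-theoretic freeness step. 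Two small presentational repairs: the linearization should be phrased as reduction modulo $t_2^2$, i.e.\ extracting the coefficient of $t_2$, which is a homomorphism on $1+t_2R$ (passing to the quotients by $1+I^k$ does not by itself linearize the higher coefficients), and only the ``only if'' direction of your displayed equivalence is needed; also, for $p=2$ one should note that $\overline{\langle 1+p\rangle}\cong\mathbb{Z}_2$ still holds and $3^z=\sum_j\binom{z}{j}2^j$ is valid, so the same choice of $g$ works.
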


{\bf Lemma \ref{final} implies Corollary \ref{alternative}:}
If $Out(K)$ contains a pro-$p$ free non-procyclic subgroup we can apply Corollary \ref{aut}. Then we can assume that $Out(K)$ does not contain a pro-$p$ free non-procyclic subgroup. We can further assume that the pro-$p$ version of the Bieri-Strebel result holds otherwise Corollary \ref{alternative} holds i.e. if a finitely presented pro-$p$ group does not contain a free non-procyclic pro-$p$ subgroup then any metabelian pro-$p$ quotient of that group is a finitely presented pro-$p$ group.

 Let $H$  be a pro-$p$ subgroup of $Out(M)$  as in Lemma \ref{final}. Since $Aut_0(M)$ has finite index  in $Aut(M)$ without loss of generality we can assume that $H \subseteq Out_0(M) $. The epimorphism of pro-$p$ groups $Aut_0(K) \to Aut_0(M)$ induces an epimorphism of pro-$p$ groups $Out_0(K) \to Out_0(M)$. Then there is a finitely generated pro-$p$ subgroup $\widetilde{H}$ of  $Out_0(K)$ that maps surjectively to $H$, in particular $\widetilde{H}$ has a metabelian pro-$p$ quotient that is not finitely presented (as a pro-$p$ group).
Then by the previous considerations $\widetilde{H}$ is not a finitely presented pro-$p$ group.

Note that  $Inn(K) \simeq K$. Consider the short exact sequence $1 \to K \to Aut_0(K) \to Out_0(K) \to 1$  and let $H_0$ be the preimage of $\widetilde{H}$ in $Aut_0(K)$. Then there is a short exact sequence $$ 1 \to K \to H_0 \to \widetilde{H} \to 1$$ of pro-$p$ groups. Since $K$ is a finitely generated pro-$p$ group we have that $H_0$ is a finitely generated pro-$p$ group and $H_0$ is not finitely presented otherwise $\widetilde{H}$ would be a finitely presented pro-$p$ group, a contradiction. Thus  $Aut_0(K)$ is incoherent (in the category of pro-$p$ groups).

{\bf Proof of Lemma \ref{final}} Here we use significantly ideas introduced in \cite{R0}.
We fix $x_1, x_2$ a generating set of $M$.
Define $$IAut(M) = \{ \varphi \in Aut(M) \ | \ \varphi \hbox{ induces on } M/ M'\hbox{ the identity map} \},$$ where $Aut(M)$ denotes continuous automorphisms of $M$. 
In fact every abstract automorphism of a finitely generated pro-$p$ group is a continous one.
Then there is a short exact sequence of profinite groups
$$1 \to IAut(M) \to Aut(M) \to Aut(M^{ ab}) = GL_2(\mathbb{Z}_p) \to 1.$$

By \cite{R0} there is a Bachmut embedding $\beta$ of $IAut(M)$ in $GL_2(\mathbb{Z}_p[[M^{ab}]])$, where $M^{ab}$ is the abelianization of $M$ i.e. the maximal pro-$p$ abelian quotient of $M$. By definition $$\beta(\varphi) = ( \partial (x_i^{ \varphi}) / \partial x_j),$$ where we use the notations from \cite{R0}, thus $Aut(M)$ in this proof acts on the right, $ \partial (x_i^{ \varphi}) / \partial x_j =  \partial / \partial x_j (x_i^{ \varphi}) $ and $$\partial/ \partial x_j : M \to \mathbb{Z}_p[[M^{ab}]]$$ are the Fox derivatives 
 defined by $$\partial / \partial x_j (1) = 0,   \partial / \partial x_j ( g_1 g_2) = \partial/ \partial x_j ( g_1) + \overline{g}_1 \partial / \partial x_j (g_2), \partial/ \partial x_j (x_i) = \delta_{i,j} \hbox{ the Kroniker symbol,}$$ where $\overline{g}_1$ is the image of $g_1 \in M$ in $ M^{ ab}$. Define  $det(\varphi) = det (\beta(\varphi))$. By \cite{R0} $$det(IAut(M)) = 1 + \Delta =: P$$ is a multiplicative abelian group, where $\Delta$ is the unique  maximal ideal of $\mathbb{Z}_p[[M^{ab}]]$, and the  $GL_2(\mathbb{Z}_p)$-action via conjugation on the abelianization of $IAut(M))$ induces an action on $det(IAut(M)) = P$. Then we have a short exact sequence of profinite groups $$1 \to P \to Aut(M)/ Ker (det) \to GL_2(\mathbb{Z}_p) \to 1.$$
 
 Consider the pro-$p$ group $$GL_2^1(\mathbb{Z}_p) = Ker(GL_2(\mathbb{Z}_p) \to GL_2(\mathbb{F}_p))$$ Let $Q$ be the maximal pro-$p$ quotient of $P$ that has exponent $p$. Then there is a pro-$p$ subgroup $T$ of $ Aut(M)/ Ker (det) $ and a short exact sequence of pro-$p$ groups $$ 1 \to P \to T \to GL_2^1(\mathbb{Z}_p) \to 1$$ and a pro-$p$ quotient $T_0$ of $T$ together with a short exact sequence of pro-$p$ groups $$ 1 \to Q \to T_0 \to 
  GL_2^1(\mathbb{Z}_p) \to 1.$$
  By \cite{R} $$P^p \cap ( 1 + p \Delta) = 1 + p^2 \Delta$$ and for $\delta \in \Delta$ using  $[\delta]$ for the image of $ 1 + p \delta$ in $Q$ we have that $$[\delta_1] [\delta_2] = [ \delta_1 + \delta_2].$$ Thus the multiplicative  subgroup of $Q$ generated by $\{ [\delta] \ | \ \delta \in \Delta \}$ could be identified with the additive group that is the image of $\Delta$ mod $p$ i.e. with the augmentation ideal $s_1 \mathbb{F}_p[[s_1, s_2]] + s_2 \mathbb{F}_p[[s_1, s_2]]$ of $\mathbb{F}_p[[s_1, s_2]]$, where $s_i$ is the image of $x_i-1$ in $\mathbb{Z}_p[[M^{ ab}]]$.

 Consider now $\varphi_2 \in Aut(M)$ given by
  $$\varphi_2 = \rho^p, \hbox{ where }\rho (x_1) = x_1 x_2, \rho(x_2) = x_2$$ and $\varphi_1 \in IAut(M)$ such that $$det(\varphi_1) = 1 + p s_1.$$ Note that $\varphi_1$ is not uniquely determined and that the image of $\varphi_2$ in $GL_2(\mathbb{Z}_p)$ is in $GL_2^1(\mathbb{Z}_p)$. Hence the profinite subgroup $\Gamma$ of $Aut(M)$ generated by $\varphi_1, \varphi_2$ is in fact a pro-$p$ group.  Let $$\Gamma_0 = \langle \psi_1, \psi_2 \rangle$$ be the image of $\Gamma$ in $T_0$, where $\psi_i$ is the image of $\varphi_i$ in $T_0$. Thus $\Gamma_0$ is a pro-$p$ group.
 
 By \cite[Prop. 4.4]{R0}  for every $\varphi \in IAut(M)$ for $\varphi'= \rho^{ -1} \varphi \rho$, $h'= det (\beta(\varphi'))$ and $h = det( \beta(\varphi))$ we have that $h'$ is obtained from $h$ applying the substitution $s_1 \to s_1 + s_2 + s_1 s_2$.

 Recall that by construction $det (\beta(\varphi_1)) = 1 + p s_1$. Then the action of $\psi_2$ on $\psi_1 = [s_1]$  by conjugations is induced by applying the substitution $s_1 \to s_1 + s_2 + s_1 s_2$ exactly $p$-times, thus gives the substitution $s_1 \to (1 + s_1) ( 1 + s_2) ^p -1$. 
 Similarly the action of $\psi_2^k$ on  $\psi_1 = [s_1]$ by conjugation is induced by applying the substitution $s_1 \to s_1 + s_2 + s_1 s_2$ exactly $pk$-times, thus gives the substitution $s_1 \to (1 + s_1) ( 1 + s_2) ^{pk} -1$. As explained above we can move to additive notation and work in the augmentation ideal $s_1 \mathbb{F}_p[[s_1, s_2]] + s_2 \mathbb{F}_p[[s_1, s_2]]$ of $\mathbb{F}_p[[s_1, s_2]]$.  This implies that the normal pro-$p$ subgroup $A$ of $\Gamma_0$ generated by $\psi_1$ can be identified with an aditive subgroup of  $s_1 \mathbb{F}_p[[s_1, s_2]] + s_2 \mathbb{F}_p[[s_1, s_2]]$ that contains $(1 + s_1) ( 1 + s_2) ^{p k} -1$ for $k \geq 0$, in particular $A$ is infinite. 
 
 Note that $\Gamma_0 \simeq A \rtimes \mathbb{Z}_p$, where $\mathbb{Z}_p $ is generated by $\psi_2$. We view $A$ as a $\mathbb{F}_p[[t]]$-module via the conjugation action of $\psi_2 = 1 + t$. Furthermore $A$ is a pro-$p$ cyclic  $\mathbb{F}_p[[t]]$-module, with a generator $\psi_1$. Since every proper $\mathbb{F}_p[[t]]$-module quotient of $\mathbb{F}_p[[t]]$ is a finite additive group, we deduce that $A \simeq \mathbb{F}_p[[t]]$. 
 Then  by the example after Theorem \ref{KK} $\Gamma_0$ is not a finitely presented pro-$p$ group.

  Note that the image $W$ of $M \simeq Inn (M)$ in $T_0$ is inside $Q$ and since $M$ is a finitely generated pro-$p$ group and $Q$ is an abelian pro-$p$ group of finite exponent $p$ then $W$ and consequently $\Gamma_0 \cap W$ are finite. 
 Since $\Gamma_0 \cap W$ is finite $\Gamma_0/ (\Gamma_0 \cap W)$ is  not a finitely presented pro-$p$ group. Actually examining the structure of $\Gamma_0$ it is easy to see that any finite normal subgroup of $\Gamma_0$ is trivial, in particular $\Gamma_0 \cap W = 1$. Finally $\Gamma_0 \simeq \Gamma_0 / (\Gamma_0 \cap W)$ is a metabelian pro-$p$ quotient of a 2-generated pro-$p$ group  $ H \leq Out(M)$. This completes the proof of the lemma.


\begin{thebibliography}{99}









\bibitem{B-L} Y. Barnea, M. Larsen, {\it A non-abelian free pro-p group is not linear over a local field,} J. Algebra 214 (1999), no. 1, 338 - 341

\bibitem{BE-Z} D. E-C. Ben-Ezra,  E. Zelmanov, {\it On pro-2 identities of 2×2 linear groups}, Trans. Amer. Math. Soc. 374 (2021), no. 6, 4093 - 4128 
	
	\bibitem{B-B} 
	{ M. Bestvina, N. Brady,}
	{\it Morse theory and finiteness properties of groups,}
	Inventiones Mathematicae, 129 (1997), 445 - 470
	
	\bibitem{B-S}  R. Bieri, R. Strebel, {\it Valuations and finitely presented metabelian groups}, Proc. London Math. Soc. (3) 41(1980) 439–464.
	
	\bibitem{B-N-S} 
	{ R. Bieri, W. D. Neumann, R. Strebel,}
	{\it A geometric invariant of discrete groups,}
	Inventiones mathematicae, 90  (1987), 451 - 477
	
	
	
	\bibitem{B-R} 
	{ R. Bieri, B. Renz.}
	{\it Valuations on free resolutions and higher geometric invariants of groups.}
	Commentarii Mathematici Helvetici, 3 (1988),   v. 63., 464 - 497

\bibitem{DSMS} J. Dixon, M. P. F. Du Sautoy, A. Mann, and D. Segal, “Analytic Pro-p Groups,”
London Math. Soc. Lecture Note Ser., Vol. 157, Cambridge Univ. Press, Cambridge,
UK, 1991

\bibitem{F-H} M. Feign, M. Handel, {\it Mapping tori of free group automorphisms are
coherent}, Ann. Math. 149 1061–1077 (1999)

\bibitem{F-V} S. Friedl,
 S. Vidussi, 
{\it BNS invariants and algebraic fibrations of group extensions}, Journal of the Institute of Mathematics of Jussieu,  to appear, arXiv:1912.10524


\bibitem{G} C. McA. Gordon, {\it Artin groups, 3-manifolds and coherence}, Bol. Soc. Mat.
Mexicana (3) 10 (2004),  193–198 




\bibitem{King} J. D. King, {\it Homological finiteness conditions for pro-p groups}, Comm. Algebra 27 (1999), no. 10, 4969 - 4991

\bibitem{King2}  J. D. King, {\it A geometric invariant for metabelian pro-p groups}, J. London Math. Soc. (2) 60 (1999), no. 1, 83 - 94

\bibitem{K-L} D. H. Kochloukova, F. F.  Lima, {\it  Homological finiteness properties of fibre products}, Q. J. Math. 69 (2018), no. 3, 835 - 854




\bibitem{K-V}   D. H. Kochloukova, 
 S. Vidussi, {\it Higher dimensional algebraic fiberings of group extensions}, preprint, arXiv:2205.05246

 \bibitem{K-Z} D. H. Kochloukova, P. A. Zalesskii, {\it On pro-p analogues of limit groups via extensions of centralizers,} Math. Z. 267 (2011), no. 1 - 2, 109 - 128
 
\bibitem{K-W} R. Kropholler, G. Walsh, {\it Incoherence and fibering of many free–by–free groups}, Ann. Inst. Fourier (Grenoble) (to appear), arXiv:1910.09601 
	
\bibitem{K}	B. Kuckuck, {\it Subdirect products of groups and the $n-(n+1)-(n+2)$ conjecture}, Q. J. Math. 65 (2014), 1293 - 1318

\bibitem{L} A. Lubotzky, {\it Combinatorial group theory for pro-p-groups}, J. Pure Appl. Algebra
25 (1982), 311 - 325



\bibitem{book1} J. Neukirch, A. Schmidt, K. Wingberg, {\it Cohomology of Number Fields}, Grundlehren der Mathematischen Wissenschaften, Vol. 323, Springer-Verlag, Berlin 2000
	

\bibitem{Ri} L. Ribes, On amalgamated products of profinite groups, Math. Z. 123 (1971), 357 - 364

\bibitem{R-Z} L. Ribes, P. Zalesskii, {\it Profinite Groups}, 2nd ed., Springer, Berlin, 2010

\bibitem{R0} V. A. Romankov,{\it  Generators for the automorphism groups of free metabelian pro-$p$ groups},  Sibirski Matematicheski Zhurnal, Vol. 33, No. 5, 145 - 158,
1992

\bibitem{R} V. A. Romankov, {\it Infinite generation of groups of automorphisms of free pro-p-groups}, Sibirsk. Mat. Zh. 34 (1993), no. 4, 153–159

\bibitem{S-Z} I. Snopce, P. A. Zalesskii, {\it Subgroup properties of pro-p extensions of centralizers}, Selecta Math. (N.S.) 20 (2014), no. 2, 465 - 489


\bibitem{W} H. Wilton, {\it Hall's theorem for limit groups,} Geom. Funct. Anal. 18 (2008), no. 1, 271 - 303 

\bibitem{Z} A. N. Zubkov, {\it Nonrepresentability of a free nonabelian pro-p-group by second-order matrices}, Sibirsk. Mat. Zh. 28 (1987), no. 5, 64 - 69



\end{thebibliography}
\end{document}